\documentclass[10pt,letterpaper]{amsart}
\usepackage{amsmath,amssymb,amsfonts,amsthm}
\usepackage{graphicx}
\usepackage{booktabs}
\usepackage{array}
\usepackage{bm}
\usepackage{enumitem}
\usepackage{mathrsfs}
\usepackage{cite}
\usepackage{hyperref}
\usepackage{cleveref}
\usepackage{doi}

\numberwithin{equation}{section}

\newtheorem{theorem}{Theorem}[section]
\newtheorem{proposition}[theorem]{Proposition}

\newtheorem{corollary}[theorem]{Corollary}
\theoremstyle{definition}
\newtheorem{definition}[theorem]{Definition}
\newtheorem{remark}[theorem]{Remark}
\newtheorem{example}[theorem]{Example}

\title[Time-Scaled Intertwining Cocycles]
{Time-Scaled Intertwining Cocycles and Identifiability of Multi-Semigroup Mixtures on Hilbert Operator Networks}

\author{Anton Alexa}
\address{Independent Researcher, Chernivtsi, Ukraine}
\email{mail@antonalexa.com}

\begin{document}

\begin{abstract}
We study rigidity phenomena for time-scaled intertwining families of dissipative semigroups $\mathcal S_i(t)=e^{-tA_i}$ and prove that a network of bounded injective operators satisfying $K_{ij}\mathcal S_j(t)=\mathcal S_i(\lambda_{ij}t)K_{ij}$ and $K_{ik}=K_{ij}K_{jk}$ necessarily admits a multiplicative gauge representation $\lambda_{ij}=\tau_i/\tau_j$, if and only if the renormalized generators $\{\tau_iA_i\}$ form a common isospectral class with matching eigenspace dimensions; in particular, eigenspaces are transported isomorphically across sectors. The operators $K_{ij}$ define parallel transport in a flat Hilbert bundle over the index network, with flatness derived from the intertwining constraints rather than assumed. As an application, the mixture observable $M(t)=\sum_i w_i\mathcal B_0K_{0i}\mathcal S_i(t)\psi_i$ reduces under finite spectral support to a structured exponential sum. Under spectral separation, the modal parameters are uniquely identifiable, with sector tags determined intrinsically by the operator spectra; under eigenspace observability, active state components are uniquely recovered. Finite-window exact reconstruction holds from $2L$ samples, and the stability bound $\|\widehat\Theta-\Theta_\ast\|_{\mathcal X}\le C_{\mathrm{stab}}\kappa_{\mathrm{exp}}\varepsilon$ follows with constants explicitly controlled by the spectral geometry and observability of the network.
\end{abstract}
\maketitle

\section{Introduction}
\label{sec:introduction}

Intertwining relations between semigroups are classical in operator
theory~\cite{hille-phillips,pazy1983,engel-nagel}, but the structure of
intertwining families with nontrivial time rescaling has not been fully
characterized. In this work we study rigidity phenomena for such
time-scaled intertwining networks and show that the associated scaling
structure is necessarily of multiplicative gauge type, leading to a
global isospectral compatibility across sectors.

More precisely, we consider families of contraction semigroups
$\mathcal S_i(t)=e^{-tA_i}$ on Hilbert spaces $\mathcal H_i$ together with
bounded injective operators $K_{ij}$ satisfying the cocycle condition
$K_{ik}=K_{ij}K_{jk}$ and the time-scaled intertwining relation
$K_{ij}\mathcal S_j(t)=\mathcal S_i(\lambda_{ij}t)K_{ij}$. We show that these
constraints force the scaling factors to admit a gauge representation
$\lambda_{ij}=\tau_i/\tau_j$, and that, after renormalization, the generators
$\{\tau_iA_i\}$ form a common isospectral class with matching eigenspace
dimensions. In particular, eigenspaces are transported isomorphically
across sectors. This provides a complete intrinsic characterization of
admissible intertwining networks and identifies a rigidity mechanism
linking cocycle structure and spectral data.

Two features distinguish the present setting from existing frameworks.
First, classical intertwining theory treats a single pair of semigroups
with a common time argument,
$K\mathcal S_B(t)=\mathcal S_A(t)K$~\cite{engel-nagel,haase2006}; here
the transfer operators form a cocycle over a full index network, each
relation carries an independent time rescaling, and it is the interplay
of these two constraints that produces the rigidity. Second, unlike
Prony-type reconstruction methods, which operate directly on exponential
sums~\cite{potts-tasche,plonka-tasche2014,plonka-potts-steidl-tasche2019,batenkov-yomdin},
the cocycle structure enforces spectral compatibility before any
reconstruction step, so that structural admissibility and estimation
error are separated at the level of the model itself.

From a structural viewpoint, the operators $K_{ij}$ define parallel
transport maps in a Hilbert bundle over the index network, and the gauge
representation implies that this bundle is flat. Importantly, flatness is
not imposed but follows from the intertwining relations themselves,
revealing an operator-theoretic origin of the underlying geometric
structure.

Background on semigroup theory and the infinitesimal generator calculus
can be found in~\cite{hille-phillips,pazy1983,engel-nagel,arendt2011}.
Intertwining operators in the classical time-preserving setting appear
in connection with similarity theory and the $H^\infty$-functional
calculus~\cite{haase2006}, but the time-scaled network setting studied
here lies outside that framework.

As an application, we consider multi-sector observables obtained by
projecting the transported semigroup evolutions onto a fixed reference
channel. Under finite spectral support, the resulting model reduces to a
structured exponential sum. We show that spectral separation ensures
uniqueness of the modal decomposition together with intrinsic sector
identification, while an additional observability condition yields
recovery of active eigenspace components. Furthermore, we establish
finite-window exact reconstruction and a quantitative stability estimate
in which the reconstruction error is explicitly controlled by the
spectral geometry and the conditioning of the associated exponential
fitting problem.

\section{Time-Scaled Intertwining Networks}
\label{sec:networks}

Classical intertwining in the single-pair setting requires
$K\mathcal{S}_B(t)=\mathcal{S}_A(t)K$ with a common time
argument~\cite{engel-nagel,haase2006}. Two structural extensions are
imposed here: the transfer operators $\{K_{ij}\}$ satisfy a
multiplicative cocycle identity over the full index
network~\eqref{cocycle}, and each intertwining relation carries an
independent positive rescaling $\lambda_{ij}$~\eqref{intertwining}.
These two constraints are not independent: together they force
$\lambda_{ij}=\tau_i/\tau_j$ for a single family of gauge parameters
$\{\tau_i\}$ (Theorem~\ref{thm:gauge-representation}), and cycle
products of $\lambda_{ij}$ are automatically trivial
(Corollary~\ref{cor:cycle-consistency}).

Let $I$ be an index set.
For each $i\in I$, let (i) $\mathcal H_i$ be a separable Hilbert space,
(ii) $A_i$ be a positive self-adjoint operator on $\mathcal H_i$ with
compact resolvent---which forces $\sigma(A_i)$ to be pure point with
finite multiplicities~\cite{kato1980}---and (iii)
$\mathcal S_i(t)=e^{-tA_i}$ be the associated strongly continuous
contraction semigroup~\cite{pazy1983,engel-nagel}. We assume each
sector is nontrivial: $\sigma(A_i)$ contains at least one strictly
positive eigenvalue.

\begin{definition}[Time-Scaled Intertwining Cocycle]
A family of bounded injective operators
\begin{equation}
K_{ij} : \mathcal H_j \to \mathcal H_i
\end{equation}
is called a time-scaled intertwining cocycle
with scaling factors $\lambda_{ij} > 0$
if for all $i,j,k \in I$:

\begin{align}
K_{ik} &= K_{ij} K_{jk}, \label{cocycle}\\
K_{ij} \mathcal S_j(t)
&= \mathcal S_i(\lambda_{ij} t) K_{ij}
\quad \text{for all } t \ge 0. \label{intertwining}
\end{align}
\end{definition}

\begin{remark}
Condition~\eqref{cocycle} is a cocycle-type transitivity condition on
the system of transfer operators.
In the time-preserving case
$\lambda_{ij}=1$ for all $i,j$, condition~\eqref{intertwining}
reduces to ordinary intertwining $K_{ij}\mathcal S_j(t)=\mathcal
S_i(t)K_{ij}$ and imposes no constraint on any scaling factors;
the gauge rigidity of Theorem~\ref{thm:gauge-representation} is
a consequence of the interplay between~\eqref{cocycle}
and~\eqref{intertwining} when the $\lambda_{ij}$ are allowed to
differ from $1$. Injectivity of each $K_{ij}$ is assumed throughout;
it will be seen in Theorem~\ref{cor:global-spectral-compatibility}
that under the cocycle structure, invertibility follows automatically.
\end{remark}

\begin{proposition}[Multiplicativity of Scaling Factors]
\label{prop:multiplicativity}
Assume the family $\{K_{ij}\}$ satisfies \eqref{cocycle}--\eqref{intertwining}.
Then
\begin{equation}
\lambda_{ik} = \lambda_{ij} \lambda_{jk}
\qquad\text{for all }i,j,k\in I.
\end{equation}
\end{proposition}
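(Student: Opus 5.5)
Compose the two intertwining relations along the path $k\to j\to i$ and compare the result with the direct relation for $K_{ik}$. Using \eqref{intertwining} twice, for $t\ge 0$,
\begin{equation*}
K_{ij}K_{jk}\,\mathcal S_k(t)=K_{ij}\,\mathcal S_j(\lambda_{jk}t)K_{jk}=\mathcal S_i(\lambda_{ij}\lambda_{jk}t)\,K_{ij}K_{jk}.
\end{equation*}
By the cocycle identity \eqref{cocycle} the left side equals $K_{ik}\mathcal S_k(t)=\mathcal S_i(\lambda_{ik}t)K_{ik}$. Writing $\mu=\lambda_{ik}$ and $\nu=\lambda_{ij}\lambda_{jk}$, we obtain
\begin{equation*}
\mathcal S_i(\mu t)K_{ik}=\mathcal S_i(\nu t)K_{ik}\qquad\text{for all }t\ge0.
\end{equation*}

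The remaining step, which I expect to be the only real obstacle, is to conclude $\mu=\nu$. An equality of the form $\mathcal S_i(\mu t)K=\mathcal S_i(\nu t)K$ says nothing about the scalars if the range of $K=K_{ik}$ is too small. For instance, it is vacuous on $\ker A_i$, where the semigroup acts trivially. Two things are needed: the range of $K_{ik}$ must contain a vector with a nonzero component in an eigenspace of $A_i$ for a strictly positive eigenvalue, and the spectral calculus must then separate the two rates.

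For the first point, take any $v\in\mathcal H_k$ with $v\neq0$ and set $u=K_{ik}v$, which is nonzero by injectivity. I would show that $u$ cannot lie in $\ker A_i$ whenever $v\notin\ker A_k$. If $u\in\ker A_i$, then $K_{ik}\mathcal S_k(t)v=\mathcal S_i(\lambda_{ik}t)u=u=K_{ik}v$ for all $t$. Injectivity of $K_{ik}$ then gives $\mathcal S_k(t)v=v$, hence $v\in\ker A_k$. The standing assumption that each sector is nontrivial supplies an eigenvector $v$ of $A_k$ with eigenvalue $\beta>0$. For this $v$, $u=K_{ik}v\notin\ker A_i$.

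Next I would check that $u$ is itself an eigenvector of $A_i$. From $\mathcal S_i(\lambda_{ik}t)u=K_{ik}\mathcal S_k(t)v=e^{-\beta t}u$, the orbit $s\mapsto\mathcal S_i(s)u$ equals $e^{-(\beta/\lambda_{ik})s}u$. Hence $u\in D(A_i)$ with $A_iu=(\beta/\lambda_{ik})u$, and the eigenvalue $\alpha:=\beta/\lambda_{ik}$ is strictly positive.

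Now apply the displayed identity to $v$. Using $\mathcal S_i(s)u=e^{-\alpha s}u$ on both sides gives $e^{-\alpha\mu t}u=e^{-\alpha\nu t}u$ for all $t\ge0$. Since $u\ne0$ and $\alpha>0$, taking, say, $t=1$ and logarithms yields $\alpha\mu=\alpha\nu$, so $\mu=\nu$, that is, $\lambda_{ik}=\lambda_{ij}\lambda_{jk}$.

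The argument uses only \eqref{cocycle}, \eqref{intertwining}, injectivity, self-adjointness with pure point spectrum, and the nontriviality of the sectors. No compactness beyond the existence of a positive eigenvalue is needed, and the indices $i,j,k$ may coincide without any change to the argument.
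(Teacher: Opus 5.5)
Your proof is correct and follows essentially the same route as the paper: push an eigenvector of $A_k$ with positive eigenvalue through $K_{ik}=K_{ij}K_{jk}$, note that its image is an eigenvector of $A_i$, and compare the decay rates obtained from the direct and the composed intertwining relations. Your preliminary step showing $u\notin\ker A_i$ is redundant, since it already follows from $A_iu=(\beta/\lambda_{ik})u$ with $\beta/\lambda_{ik}>0$.
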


\begin{proof}
Let $\phi\in\mathcal{H}_k$ be an eigenvector of $A_k$ with eigenvalue $\alpha_k>0$,
and set $v=K_{ik}\phi\neq0$ (injectivity of $K_{ik}$).
Applying \eqref{intertwining} for $K_{ik}$,
\begin{equation}
\mathcal S_i(\lambda_{ik}t)\,v = K_{ik}\mathcal S_k(t)\phi = e^{-\alpha_k t}v,
\end{equation}
so $v$ is an eigenvector of $A_i$ with eigenvalue $\alpha_k/\lambda_{ik}$.
Using $K_{ik}=K_{ij}K_{jk}$ from \eqref{cocycle} and applying \eqref{intertwining}
first to $K_{jk}$ then to $K_{ij}$,
\begin{equation}
\mathcal S_i(\lambda_{ij}\lambda_{jk}t)\,v
= K_{ij}\mathcal S_j(\lambda_{jk}t)K_{jk}\phi
= K_{ij}K_{jk}\mathcal S_k(t)\phi
= e^{-\alpha_k t}v.
\end{equation}
Hence $v$ is also an eigenvector of $A_i$ with eigenvalue $\alpha_k/(\lambda_{ij}\lambda_{jk})$.
Since $A_i$ is self-adjoint and $\alpha_k>0$, the eigenvalue is unique, so
$\lambda_{ik}=\lambda_{ij}\lambda_{jk}$.
\end{proof}

\begin{theorem}[Gauge representation of scaling factors]
\label{thm:gauge-representation}
Assume the cocycle relations \eqref{cocycle}--\eqref{intertwining} hold.
Then there exists a family $\{\tau_i\}_{i\in I}$ with $\tau_i>0$ such that
\begin{equation}
\lambda_{ij}=\frac{\tau_i}{\tau_j}
\qquad\text{for all }i,j.
\end{equation}
Equivalently, the scaling cocycle is a multiplicative coboundary.
\end{theorem}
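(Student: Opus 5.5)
The plan is to derive the gauge representation directly from Proposition~\ref{prop:multiplicativity} by fixing a base point in the index network. Since $I$ is assumed nonempty (each sector is nontrivial), choose a reference index $i_0\in I$ and define
\begin{equation}
\tau_i:=\lambda_{i i_0}\qquad (i\in I).
\end{equation}
Each $\tau_i$ is strictly positive because all scaling factors are positive by definition of a time-scaled intertwining cocycle.

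The first preliminary step is to record the normalizations forced by multiplicativity. Taking $i=j=k$ in Proposition~\ref{prop:multiplicativity} gives $\lambda_{ii}=\lambda_{ii}^2$, hence $\lambda_{ii}=1$ since $\lambda_{ii}>0$. Taking $k=i$ then gives $1=\lambda_{ii}=\lambda_{ij}\lambda_{ji}$, so $\lambda_{ji}=\lambda_{ij}^{-1}$. In particular $\tau_{i_0}=1$ and $\lambda_{i_0 j}=1/\tau_j$.

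The main step applies Proposition~\ref{prop:multiplicativity} with the intermediate index $i_0$:
\begin{equation}
\lambda_{ij}=\lambda_{i i_0}\lambda_{i_0 j}=\tau_i\cdot\frac{1}{\tau_j},
\end{equation}
which is the claimed identity $\lambda_{ij}=\tau_i/\tau_j$. The coboundary formulation is just a restatement: $\log\lambda_{ij}=\log\tau_i-\log\tau_j$ is an additive coboundary.

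I do not expect a genuine obstacle here, because all of the analytic content (the use of eigenvectors, injectivity and uniqueness of eigenvalues for self-adjoint $A_i$) is already contained in Proposition~\ref{prop:multiplicativity}. Two bookkeeping points still need care. First, the cocycle conditions are assumed for \emph{all} triples in $I$, including degenerate ones such as $i=j=k$; this is what makes $\lambda_{ii}=1$ available. Second, the gauge is unique only up to a common positive factor, since replacing $\tau_i$ by $c\tau_i$ with $c>0$ leaves every ratio $\tau_i/\tau_j$ unchanged. It is worth remarking on this, but it is not needed for existence.
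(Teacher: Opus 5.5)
Your proof is correct and is essentially the paper's argument: the same base-point gauge $\tau_i=\lambda_{ii_0}$ followed by a single application of Proposition~\ref{prop:multiplicativity}. The only cosmetic difference is that the paper puts $i_0$ in the terminal slot, $\lambda_{ii_0}=\lambda_{ij}\lambda_{ji_0}$, and divides by $\tau_j$. That avoids your preliminary normalization step $\lambda_{ii}=1$, $\lambda_{ji}=\lambda_{ij}^{-1}$.
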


\begin{proof}
Fix $i_0\in I$ and define $\tau_i=\lambda_{ii_0}>0$.
By Proposition~\ref{prop:multiplicativity},
\begin{equation}
\lambda_{ii_0}=\lambda_{ij}\lambda_{ji_0}
\end{equation}
for all $i,j\in I$, hence
\begin{equation}
\lambda_{ij}=\frac{\lambda_{ii_0}}{\lambda_{ji_0}}
=\frac{\tau_i}{\tau_j}.
\end{equation}
Therefore $\lambda$ is a multiplicative coboundary.
\end{proof}

\begin{corollary}[Cycle consistency]
\label{cor:cycle-consistency}
For every cycle $i_0\to i_1\to\cdots\to i_m=i_0$,
\begin{equation}
\prod_{r=0}^{m-1}\lambda_{i_{r+1}i_r}=1.
\end{equation}
\end{corollary}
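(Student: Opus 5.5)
The plan is to derive the cycle identity directly from the gauge representation of Theorem~\ref{thm:gauge-representation}, so that the product over the cycle telescopes. By that theorem there are positive numbers $\{\tau_i\}_{i\in I}$ with $\lambda_{ij}=\tau_i/\tau_j$ for all $i,j\in I$. Applying this to each edge of the cycle $i_0\to i_1\to\cdots\to i_m=i_0$ gives
\begin{equation}
\prod_{r=0}^{m-1}\lambda_{i_{r+1}i_r}
=\prod_{r=0}^{m-1}\frac{\tau_{i_{r+1}}}{\tau_{i_r}}
=\frac{\tau_{i_m}}{\tau_{i_0}}=1,
\end{equation}
where the middle equality is the telescoping cancellation of all intermediate $\tau_{i_r}$. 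The last equality uses $i_m=i_0$ together with $\tau_{i_0}>0$.

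As a cross-check that avoids the gauge parameters, we can also argue by induction on $m$ using Proposition~\ref{prop:multiplicativity} alone. Iterating $\lambda_{ik}=\lambda_{ij}\lambda_{jk}$ along the path yields
\begin{equation}
\prod_{r=0}^{m-1}\lambda_{i_{r+1}i_r}=\lambda_{i_m i_0}=\lambda_{i_0i_0}.
\end{equation}
It then remains to show $\lambda_{ii}=1$. Setting $i=j=k$ in Proposition~\ref{prop:multiplicativity} gives $\lambda_{ii}=\lambda_{ii}^2$, and since $\lambda_{ii}>0$ this forces $\lambda_{ii}=1$. Alternatively, $\tau_i/\tau_i=1$ gives the same conclusion immediately.

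There is no genuine obstacle here. The only point needing care is the degenerate cases: a cycle of length $m=1$ is the loop $i_0\to i_0$, handled by $\lambda_{i_0i_0}=1$, and $m=0$ is the empty product, which equals $1$ by convention. The substantive content, namely that multiplicativity holds for arbitrary triples, including repeated indices, is already supplied by Proposition~\ref{prop:multiplicativity}. That proposition rests on the uniqueness of the eigenvalue of the self-adjoint $A_i$ attached to the nonzero vector $K_{ik}\phi$.
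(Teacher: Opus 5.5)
Your proof is correct; it differs from the paper's mainly in where the fact $\lambda_{ii}=1$ comes from. The paper does not use Theorem~\ref{thm:gauge-representation}. It iterates Proposition~\ref{prop:multiplicativity} along the cycle, exactly as in your cross-check, to reduce the product to $\lambda_{i_0i_0}$. It then proves $\lambda_{ii}=1$ by going back to the operators: \eqref{cocycle} with $j=k=i$ gives $K_{ii}=K_{ii}^2$, and injectivity forces $K_{ii}=I_{\mathcal H_i}$. Then \eqref{intertwining} becomes $\mathcal S_i(t)=\mathcal S_i(\lambda_{ii}t)$, and testing on an eigenvector with positive eigenvalue yields $\lambda_{ii}=1$. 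You instead read $\lambda_{ii}=1$ off the degenerate case $i=j=k$ of Proposition~\ref{prop:multiplicativity}, or equivalently from $\tau_i/\tau_i$. This is legitimate, since that proposition's proof never uses distinctness of the indices. Your version shows that, once the proposition is in hand, the corollary is a purely formal consequence of multiplicativity of a positive function on $I\times I$, with no further operator input. The telescoping through the $\tau_i$ also displays the ``pure gauge'' reading directly. The paper's version costs a few more lines but produces the structural by-product $K_{ii}=I_{\mathcal H_i}$. It reuses this in Theorem~\ref{cor:global-spectral-compatibility} to conclude that $K_{ij}$ is invertible with $K_{ij}^{-1}=K_{ji}$.
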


\begin{proof}
Iterating Proposition~\ref{prop:multiplicativity} along the cycle gives
\begin{equation}
\prod_{r=0}^{m-1}\lambda_{i_{r+1}i_r}=\lambda_{i_m i_0}=\lambda_{i_0i_0}.
\end{equation}
From \eqref{cocycle} with $j=k=i$ we obtain $K_{ii}=K_{ii}^2$. Since $K_{ii}$
is injective, $K_{ii}=I_{\mathcal H_i}$. Then \eqref{intertwining} with
$j=i$ yields
\begin{equation}
\mathcal S_i(t)=\mathcal S_i(\lambda_{ii}t)\qquad (t\ge0).
\end{equation}
Applying this identity to an eigenvector of $A_i$ with positive eigenvalue
gives $\lambda_{ii}=1$. Hence the cycle product equals $1$.
\end{proof}

The preceding results are purely at the semigroup level. We now pass
to the infinitesimal generators and derive the spectral consequences
of the intertwining structure.

\section{Generator Relations and Spectral Rigidity}
\label{sec:generator}

Differentiating the semigroup intertwining
relation~\eqref{intertwining} with respect to time yields an algebraic
identity between the generators $A_i$ and $A_j$, a standard passage
from semigroup to generator level~\cite{pazy1983,engel-nagel}.
Multiplicativity of the scaling factors was established at the semigroup
level in Proposition~\ref{prop:multiplicativity}; here we record an
independent generator-level proof and then derive spectral consequences.
The generator identity forces spectral inclusion
$\sigma(A_j)\subseteq\lambda_{ij}\sigma(A_i)$; compare the
intertwining-based spectral calculus in~\cite{haase2006}. In the
all-pairs injective setting, each $K_{ij}$ is automatically invertible
and the inclusion sharpens to equality, placing all rescaled generators
in a common isospectral class
(Theorem~\ref{cor:global-spectral-compatibility}).

\begin{theorem}[Generator Intertwining]
\label{thm:generator}
Let $K_{ij}$ satisfy \eqref{intertwining}.
Then on $\mathrm{Dom}(A_j)$,
\begin{equation}
K_{ij} A_j = \lambda_{ij} A_i K_{ij}.
\end{equation}
\end{theorem}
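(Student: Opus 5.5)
Starting from \eqref{intertwining}, the plan is to differentiate at $t=0^+$ and use the definition of the generator of a $C_0$-semigroup. Note that $-A_i$ is the generator of $\mathcal S_i$. Take $x\in\mathrm{Dom}(A_j)$. Then
\begin{equation*}
\frac{\mathcal S_j(t)x-x}{t}\to -A_jx \qquad (t\to0^+).
\end{equation*}
Since $K_{ij}$ is bounded, applying it to this difference quotient and using \eqref{intertwining} gives
\begin{equation*}
\frac{\mathcal S_i(\lambda_{ij}t)K_{ij}x-K_{ij}x}{t}
=K_{ij}\,\frac{\mathcal S_j(t)x-x}{t}\;\longrightarrow\; -K_{ij}A_jx .
\end{equation*}

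Next we reparametrize with $s=\lambda_{ij}t$, which is legitimate because $\lambda_{ij}>0$ is fixed. The left-hand side then equals
\begin{equation*}
\lambda_{ij}\,\frac{\mathcal S_i(s)K_{ij}x-K_{ij}x}{s},
\end{equation*}
and $s\to0^+$ exactly when $t\to0^+$. The difference quotient of $\mathcal S_i$ at the vector $K_{ij}x$ therefore converges in norm, with limit $-\lambda_{ij}^{-1}K_{ij}A_jx$.

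The one point needing care is domain membership: we must know that $K_{ij}x\in\mathrm{Dom}(A_i)$ before we can write $A_iK_{ij}x$. This follows from the definition of the generator. $\mathrm{Dom}(A_i)$ is precisely the set of vectors at which the difference quotient of $\mathcal S_i$ has a norm limit, and $-A_i$ applied to such a vector is that limit. Hence $K_{ij}x\in\mathrm{Dom}(A_i)$ and
\begin{equation*}
-A_iK_{ij}x=-\lambda_{ij}^{-1}K_{ij}A_jx .
\end{equation*}
Multiplying by $-\lambda_{ij}$ gives $K_{ij}A_jx=\lambda_{ij}A_iK_{ij}x$, which is the claimed identity. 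As a by-product, $K_{ij}\mathrm{Dom}(A_j)\subseteq\mathrm{Dom}(A_i)$.

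No eigenvector argument or cocycle condition is needed; only \eqref{intertwining}, boundedness of $K_{ij}$ and $\lambda_{ij}>0$ are used. The main (mild) obstacle is the domain inclusion, and it is handled by the generator characterization above rather than by any closedness argument.
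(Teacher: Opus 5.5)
Your proposal is correct and follows the same argument as the paper. You apply the bounded $K_{ij}$ to the difference quotient of $\mathcal S_j$, rewrite it through \eqref{intertwining} with the rescaling $s=\lambda_{ij}t$, and read off both $K_{ij}\mathrm{Dom}(A_j)\subseteq\mathrm{Dom}(A_i)$ and the identity from the definition of the generator $-A_i$.
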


\begin{proof}
Fix $u\in\mathrm{Dom}(A_j)$. Since $\mathcal S_j$ has generator $-A_j$,
\begin{equation}
\lim_{t\downarrow0}\frac{\mathcal S_j(t)u-u}{t}=-A_j u
\quad\text{in }\mathcal H_j.
\end{equation}
Apply the bounded operator $K_{ij}$:
\begin{equation}
\lim_{t\downarrow0}
K_{ij}\frac{\mathcal S_j(t)u-u}{t}
=-K_{ij}A_j u.
\end{equation}
Using \eqref{intertwining},
\begin{equation}
K_{ij}\frac{\mathcal S_j(t)u-u}{t}
=
\frac{\mathcal S_i(\lambda_{ij}t)K_{ij}u-K_{ij}u}{t}
=
\lambda_{ij}
\frac{\mathcal S_i(\lambda_{ij}t)K_{ij}u-K_{ij}u}{\lambda_{ij}t}.
\end{equation}
Therefore the limit
\begin{equation}
\lim_{s\downarrow0}\frac{\mathcal S_i(s)K_{ij}u-K_{ij}u}{s}
\end{equation}
exists, so $K_{ij}u\in\mathrm{Dom}(A_i)$ and
\begin{equation}
A_iK_{ij}u=\frac{1}{\lambda_{ij}}K_{ij}A_j u.
\end{equation}
Multiplying by $\lambda_{ij}$ yields the claimed identity.
\end{proof}

\begin{remark}[Generator-level proof of multiplicativity]
\label{rem:multiplicativity-gen}
Proposition~\ref{prop:multiplicativity} also follows directly from
Theorem~\ref{thm:generator}: with $v=K_{ik}\phi=K_{ij}K_{jk}\phi$
for an eigenvector $\phi$ of $A_k$ with eigenvalue $\alpha_k>0$,
composing the generator identities $K_{jk}A_k=\lambda_{jk}A_jK_{jk}$
and $K_{ij}A_j=\lambda_{ij}A_iK_{ij}$ gives
$K_{ij}K_{jk}A_k=\lambda_{ij}\lambda_{jk}A_iK_{ij}K_{jk}$.
Since $K_{ik}A_k=\lambda_{ik}A_iK_{ik}$ and $A_iv\neq0$,
one reads off $\lambda_{ik}=\lambda_{ij}\lambda_{jk}$.
\end{remark}

\begin{theorem}[Spectral Rigidity]
\label{thm:spectral_rigidity}
Assume \eqref{intertwining} holds for $K_{ij}$.
Then
\begin{equation}
\sigma(A_j) \subseteq \lambda_{ij} \sigma(A_i).
\end{equation}
If $K_{ij}$ is boundedly invertible, equality holds.
Moreover, eigenspace multiplicities satisfy
\begin{equation}
\dim\ker(A_j-\alpha I)\le
\dim\ker\!\Big(A_i-\frac{\alpha}{\lambda_{ij}}I\Big).
\end{equation}
\end{theorem}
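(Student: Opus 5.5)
The plan is to reduce everything to eigenvectors, using that $\sigma(A_j)$ is pure point with finite multiplicities because $A_j$ has compact resolvent. The tool is the generator identity of Theorem~\ref{thm:generator}. If $\phi\in\ker(A_j-\alpha I)$ with $\phi\neq0$, that identity gives
\begin{equation*}
\lambda_{ij}A_iK_{ij}\phi=K_{ij}A_j\phi=\alpha K_{ij}\phi .
\end{equation*}
Since $K_{ij}$ is injective, $K_{ij}\phi\neq0$, so $\alpha/\lambda_{ij}$ is an eigenvalue of $A_i$. The same conclusion follows directly at the semigroup level, exactly as in the proof of Proposition~\ref{prop:multiplicativity}: $\mathcal S_i(\lambda_{ij}t)K_{ij}\phi=e^{-\alpha t}K_{ij}\phi$. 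Because $\sigma(A_j)$ consists only of eigenvalues, this proves $\sigma(A_j)\subseteq\lambda_{ij}\sigma(A_i)$.

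For the multiplicity bound, I would observe that the computation above shows $K_{ij}$ maps the whole eigenspace $\ker(A_j-\alpha I)$ into $\ker(A_i-\frac{\alpha}{\lambda_{ij}}I)$. An injective linear map between these spaces cannot decrease dimension, which gives the inequality. The spaces are finite-dimensional by the compact-resolvent assumption, but the argument is valid even without that.

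For equality under bounded invertibility, I would set $L=K_{ij}^{-1}:\mathcal H_i\to\mathcal H_j$ and derive the reverse intertwining. Applying $K_{ij}^{-1}$ on both sides of \eqref{intertwining} and substituting $s=\lambda_{ij}t$ gives
\begin{equation*}
L\,\mathcal S_i(s)=\mathcal S_j(s/\lambda_{ij})\,L \qquad (s\ge0).
\end{equation*}
This is relation \eqref{intertwining} with the roles of $i$ and $j$ swapped and scaling factor $1/\lambda_{ij}$. The first part of the argument applies verbatim to the bounded injective operator $L$, giving $\sigma(A_i)\subseteq\lambda_{ij}^{-1}\sigma(A_j)$, that is $\lambda_{ij}\sigma(A_i)\subseteq\sigma(A_j)$. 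Together with the first inclusion this yields equality. As a by-product, the multiplicity inequality also reverses, so the eigenspace dimensions agree; the statement does not claim this, but it is free.

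The main point needing care is the justification that $\sigma(A_j)$ contains no non-eigenvalue points, so that eigenvector transport captures the whole spectrum. This is exactly where the compact-resolvent hypothesis enters. For a general self-adjoint $A_j$ the inclusion for approximate eigenvalues would require a uniform lower bound on $\|K_{ij}\phi\|$, which injectivity alone does not give. I would therefore state explicitly that the argument rests on pure point spectrum. The remaining steps are routine: applying Theorem~\ref{thm:generator} is legitimate because $\phi\in\mathrm{Dom}(A_j)$, and the change of variables in the inverse relation uses only $\lambda_{ij}>0$.
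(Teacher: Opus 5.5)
Your proposal is correct and follows the paper's proof essentially step for step. You obtain the inclusion by transporting eigenvectors through the generator identity of Theorem~\ref{thm:generator}, using injectivity of $K_{ij}$ and the pure point spectrum coming from the compact resolvent; the multiplicity bound from injectivity of $K_{ij}$ restricted to eigenspaces; and equality from the reversed relation $K_{ij}^{-1}\mathcal S_i(s)=\mathcal S_j(s/\lambda_{ij})K_{ij}^{-1}$. Your extra remarks, the semigroup-level shortcut and the reversed multiplicity inequality under invertibility, are also correct.
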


\begin{proof}
Because $A_j$ is self-adjoint with compact resolvent, its spectrum is pure
point with finite multiplicities~\cite{kato1980}. Let $\alpha\in\sigma(A_j)$ and choose
$0\neq\phi\in\ker(A_j-\alpha I)$.
Injectivity of $K_{ij}$ gives $K_{ij}\phi\neq0$. By
Theorem~\ref{thm:generator},
\begin{equation}
A_i(K_{ij}\phi)=\frac{\alpha}{\lambda_{ij}}K_{ij}\phi,
\end{equation}
so $\alpha/\lambda_{ij}\in\sigma_p(A_i)\subset\sigma(A_i)$.
Hence $\sigma(A_j)\subseteq \lambda_{ij}\sigma(A_i)$.

For multiplicities, the restriction
\begin{equation}
K_{ij}:\ker(A_j-\alpha I)\to
\ker\!\Big(A_i-\frac{\alpha}{\lambda_{ij}}I\Big)
\end{equation}
is injective, therefore
\begin{equation}
\dim\ker(A_j-\alpha I)\le
\dim\ker\!\Big(A_i-\frac{\alpha}{\lambda_{ij}}I\Big).
\end{equation}

If $K_{ij}$ is boundedly invertible, apply the same argument to
$K_{ij}^{-1}$, which satisfies
\begin{equation}
K_{ij}^{-1}\mathcal S_i(t)=\mathcal S_j\!\left(\frac{t}{\lambda_{ij}}\right)K_{ij}^{-1},
\end{equation}
and obtain the reverse inclusion
$\sigma(A_i)\subseteq\lambda_{ij}^{-1}\sigma(A_j)$, i.e. equality.
\end{proof}

\begin{theorem}[Spectral Rigidity of Intertwining Networks]
\label{cor:global-spectral-compatibility}
Assume the cocycle and intertwining relations
\eqref{cocycle}--\eqref{intertwining}. Then each $K_{ij}$ is
boundedly invertible, and there exist $\tau_i>0$ such that for all $i,j$,
\begin{equation}
\sigma(\tau_i A_i)=\sigma(\tau_j A_j).
\end{equation}
All generators belong to a common scaled isospectral class.
\end{theorem}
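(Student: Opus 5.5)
The invertibility comes first, from the cocycle identity. As in the proof of Corollary~\ref{cor:cycle-consistency}, taking $j=k=i$ in \eqref{cocycle} gives $K_{ii}=K_{ii}^2$, and injectivity then forces $K_{ii}=I_{\mathcal H_i}$. Next I will apply \eqref{cocycle} with the index triples $(i,j,i)$ and $(j,i,j)$:
\begin{equation}
K_{ij}K_{ji}=K_{ii}=I_{\mathcal H_i},\qquad K_{ji}K_{ij}=K_{jj}=I_{\mathcal H_j}.
\end{equation}
So $K_{ij}$ is a bijection with bounded two-sided inverse $K_{ji}$, and $K_{ij}$ is boundedly invertible. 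No open mapping argument is needed, since the inverse is exhibited explicitly as a bounded operator. This is the step where the all-pairs hypothesis is essential: $K_{ji}$ must be defined for every ordered pair.

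With invertibility in hand, I will invoke the equality case of Theorem~\ref{thm:spectral_rigidity}, which gives
\begin{equation}
\sigma(A_j)=\lambda_{ij}\,\sigma(A_i).
\end{equation}
Then I will substitute the gauge representation $\lambda_{ij}=\tau_i/\tau_j$ from Theorem~\ref{thm:gauge-representation}, with $\tau_i=\lambda_{ii_0}$ for a fixed base point $i_0$. This yields $\sigma(A_j)=(\tau_i/\tau_j)\sigma(A_i)$. Multiplying through by $\tau_j>0$, and using $\sigma(cA)=c\,\sigma(A)$ for $c>0$ (immediate for self-adjoint operators), gives $\sigma(\tau_jA_j)=\sigma(\tau_iA_i)$.

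For the stronger statement about eigenspaces, I would add the following. The bijection $K_{ij}$ maps $\ker(A_j-\alpha I)$ injectively into $\ker(A_i-\alpha/\lambda_{ij}\,I)$, and $K_{ji}$ maps back injectively, using $\lambda_{ji}=1/\lambda_{ij}$. This equality follows from Proposition~\ref{prop:multiplicativity} together with $\lambda_{ii}=1$. Therefore the multiplicities coincide, and the isospectral class is matched including eigenspace dimensions.

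The main obstacle is really only bookkeeping: keeping the direction of the scaling consistent and confirming that $K_{ji}$ intertwines with factor $\lambda_{ji}=\lambda_{ij}^{-1}$. That consistency is what makes the inverse supplied by the cocycle agree with the formula for $K_{ij}^{-1}$ used in the proof of Theorem~\ref{thm:spectral_rigidity}.
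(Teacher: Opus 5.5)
Your proposal is correct and matches the paper's proof step for step. Idempotence plus injectivity gives $K_{ii}=I_{\mathcal H_i}$, the cocycle with index triples $(i,j,i)$ and $(j,i,j)$ exhibits $K_{ji}$ as the bounded inverse of $K_{ij}$, and the equality case of Theorem~\ref{thm:spectral_rigidity} combined with $\lambda_{ij}=\tau_i/\tau_j$ yields $\sigma(\tau_iA_i)=\sigma(\tau_jA_j)$. Your extra eigenspace-dimension argument via $\lambda_{ji}=\lambda_{ij}^{-1}$ is also sound, and it is the same reasoning the paper uses later for (i)$\Rightarrow$(ii) in Theorem~\ref{thm:existence-characterization}.
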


\begin{proof}
From \eqref{cocycle} with $j=k=i$ we get $K_{ii}=K_{ii}^2$. Since $K_{ii}$ is
injective, $K_{ii}=I_{\mathcal H_i}$. Hence
\begin{equation}
K_{ij}K_{ji}=K_{ii}=I_{\mathcal H_i},
\qquad
K_{ji}K_{ij}=K_{jj}=I_{\mathcal H_j},
\end{equation}
so each $K_{ij}$ is boundedly invertible with inverse $K_{ji}$.
By Theorem~\ref{thm:spectral_rigidity},
\begin{equation}
\sigma(A_j)=\lambda_{ij}\sigma(A_i).
\end{equation}
By Theorem~\ref{thm:gauge-representation}, $\lambda_{ij}=\tau_i/\tau_j$.
Therefore
\begin{equation}
\sigma(A_j)=\frac{\tau_i}{\tau_j}\sigma(A_i),
\end{equation}
and multiplying by $\tau_j$ yields
\begin{equation}
\sigma(\tau_jA_j)=\sigma(\tau_iA_i).
\end{equation}
Since $i,j\in I$ are arbitrary, the claim follows.
\end{proof}

\begin{proposition}[Transport of Eigen-Subspaces]
\label{prop:eigenspace_transport}
Let $\phi \in \mathcal H_j$ satisfy
\begin{equation}
A_j \phi = \alpha \phi.
\end{equation}
Then
\begin{equation}
A_i (K_{ij} \phi)
=
\frac{\alpha}{\lambda_{ij}}\, K_{ij} \phi.
\end{equation}
In particular, if $K_{ij}\phi \neq 0$, the operator $K_{ij}$
maps eigen-subspaces of $A_j$ into eigen-subspaces of $A_i$
with reciprocal scaling of eigenvalues.
\end{proposition}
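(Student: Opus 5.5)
The plan is to read the statement off from the generator intertwining identity of Theorem~\ref{thm:generator}. The only delicate point is the domain. We need $K_{ij}\phi\in\mathrm{Dom}(A_i)$ before $A_i(K_{ij}\phi)$ makes sense, and Theorem~\ref{thm:generator} supplies exactly this. Its proof shows that for every $u\in\mathrm{Dom}(A_j)$ the difference quotient of $\mathcal S_i$ at $K_{ij}u$ converges, so $K_{ij}u\in\mathrm{Dom}(A_i)$.

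First, since $A_j\phi=\alpha\phi$ presupposes $\phi\in\mathrm{Dom}(A_j)$, Theorem~\ref{thm:generator} applies with $u=\phi$. It gives $K_{ij}\phi\in\mathrm{Dom}(A_i)$ and
\begin{equation}
\lambda_{ij}A_iK_{ij}\phi=K_{ij}A_j\phi=\alpha K_{ij}\phi .
\end{equation}
Dividing by $\lambda_{ij}>0$ yields the displayed identity.

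As an independent check that avoids generators altogether, one can argue at the semigroup level. Since $\mathcal S_j(t)\phi=e^{-\alpha t}\phi$, relation \eqref{intertwining} gives
\begin{equation}
\mathcal S_i(\lambda_{ij}t)K_{ij}\phi=e^{-\alpha t}K_{ij}\phi .
\end{equation}
Reparametrising by $s=\lambda_{ij}t$, the vector $K_{ij}\phi$ is an eigenvector of every $\mathcal S_i(s)$ with eigenvalue $e^{-(\alpha/\lambda_{ij})s}$. By the spectral theorem for the self-adjoint operator $A_i$, this forces $A_iK_{ij}\phi=(\alpha/\lambda_{ij})K_{ij}\phi$, exactly as in the proof of Proposition~\ref{prop:multiplicativity}.

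For the ``in particular'' clause, the identity holds for every $\phi\in\ker(A_j-\alpha I)$. By linearity of $K_{ij}$ it follows that
\begin{equation}
K_{ij}\ker(A_j-\alpha I)\subseteq\ker\!\Big(A_i-\frac{\alpha}{\lambda_{ij}}I\Big).
\end{equation}
This is precisely the statement that eigen-subspaces are mapped into eigen-subspaces, with the eigenvalue rescaled by $\lambda_{ij}^{-1}$. The hypothesis $K_{ij}\phi\neq0$ only ensures that the image vector is a genuine eigenvector and not the zero vector. Under the standing injectivity assumption it holds automatically whenever $\phi\neq0$. Under the cocycle hypotheses, Theorem~\ref{cor:global-spectral-compatibility} makes $K_{ij}$ invertible with inverse $K_{ji}$. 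Applying the same argument to $K_{ji}$, with $\lambda_{ji}=\lambda_{ij}^{-1}$ by Theorem~\ref{thm:gauge-representation}, shows that this inclusion is in fact an isomorphism of eigenspaces.

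No step is a real obstacle. The only point needing care is the domain question already addressed: we must know $K_{ij}\phi\in\mathrm{Dom}(A_i)$ before writing $A_iK_{ij}\phi$, and we take this from Theorem~\ref{thm:generator} rather than assume it.
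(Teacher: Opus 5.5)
Your proof is correct and follows the paper's argument: both apply the generator identity $K_{ij}A_j=\lambda_{ij}A_iK_{ij}$ from Theorem~\ref{thm:generator} to the eigenvector $\phi$ and divide by $\lambda_{ij}$. Your explicit observation that Theorem~\ref{thm:generator} already yields $K_{ij}\phi\in\mathrm{Dom}(A_i)$, your semigroup-level cross-check, and your conditional isomorphism remark are all sound additions that the paper leaves implicit.
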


\begin{proof}
Using Theorem~\ref{thm:generator}, we have
\begin{equation}
K_{ij} A_j = \lambda_{ij} A_i K_{ij}.
\end{equation}
Applying both sides to $\phi$ gives
\begin{equation}
K_{ij} (\alpha \phi)
=
\lambda_{ij} A_i (K_{ij} \phi),
\end{equation}
hence
\begin{equation}
A_i (K_{ij} \phi)
=
\frac{\alpha}{\lambda_{ij}} K_{ij} \phi.
\end{equation}
\end{proof}

\begin{remark}
Proposition~\ref{prop:eigenspace_transport} says more than spectral
inclusion: each $K_{ij}$ acts as an injective (and, by
Theorem~\ref{cor:global-spectral-compatibility}, invertible)
intertwiner between the eigenspaces of $A_j$ and the corresponding
scaled eigenspaces of $A_i$. This transport of eigenspaces is the
structural fact that will underpin the identifiability analysis in
Section~\ref{sec:identifiability}: distinct sectors contribute spectrally
separated components to the mixture observable precisely because their
eigenspaces are linked by the cocycle geometry, not by coincidence.
\end{remark}

The following converse shows that spectral coincidence is not only
necessary but also sufficient for the existence of an admissible cocycle
network, completing the characterization.

\begin{theorem}[Existence Characterization]
\label{thm:existence-characterization}
Let $\{A_i\}_{i\in I}$ be positive self-adjoint operators with compact
resolvent on separable Hilbert spaces $\{\mathcal H_i\}_{i\in I}$.
The following are equivalent.

\smallskip
\noindent\emph{(i)} There exist positive scalars $\{\tau_i\}_{i\in I}$ and a family of
bounded invertible operators $\{K_{ij}\}_{i,j\in I}$ satisfying the
cocycle~\eqref{cocycle} and intertwining~\eqref{intertwining} relations.

\smallskip
\noindent\emph{(ii)} There exist positive scalars $\{\tau_i\}_{i\in I}$ such that
$\sigma(\tau_i A_i)=\sigma(\tau_j A_j)$ for all $i,j\in I$, and for
every $\alpha\in\sigma(\tau_i A_i)$,
\begin{equation}
  \dim\ker\!\bigl(A_i-\tau_i^{-1}\alpha\,I\bigr)
  =\dim\ker\!\bigl(A_j-\tau_j^{-1}\alpha\,I\bigr)
  \qquad\text{for all }i,j\in I.
\end{equation}

\smallskip
When all eigenvalues of $A_i$ are simple, condition~\emph{(ii)} reduces to the
single spectral equality $\sigma(\tau_i A_i)=\sigma(\tau_j A_j)$.
\end{theorem}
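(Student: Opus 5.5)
For (i)$\Rightarrow$(ii) I would use only earlier results. By Theorem~\ref{thm:gauge-representation} we may write $\lambda_{ij}=\tau_i/\tau_j$ (after replacing the given $\tau_i$ by the gauge parameters if needed). Theorem~\ref{cor:global-spectral-compatibility} then gives $\sigma(\tau_iA_i)=\sigma(\tau_jA_j)$. For multiplicities, take $\alpha\in\sigma(\tau_jA_j)$, so $\tau_j^{-1}\alpha\in\sigma(A_j)$. Theorem~\ref{thm:spectral_rigidity} gives
\[
\dim\ker(A_j-\tau_j^{-1}\alpha I)\le\dim\ker\bigl(A_i-\tau_j^{-1}\alpha\,\lambda_{ij}^{-1} I\bigr)=\dim\ker(A_i-\tau_i^{-1}\alpha I),
\]
using $\tau_j^{-1}\alpha/\lambda_{ij}=\tau_i^{-1}\alpha$. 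Exchanging $i$ and $j$, via the pair $K_{ji}$, $\lambda_{ji}=\tau_j/\tau_i$, gives the reverse inequality. Hence equality holds; the dimensions are finite by compact resolvent.

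For (ii)$\Rightarrow$(i) I would build the cocycle explicitly from spectral data.

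\emph{Step 1: reference frames.} Let $\Sigma=\sigma(\tau_iA_i)$, which is independent of $i$, and let $d_\alpha$ be the common multiplicity. For each $i$ and $\alpha\in\Sigma$, fix an orthonormal basis $\{e^{(i)}_{\alpha,r}\}_{r=1}^{d_\alpha}$ of $\ker(A_i-\tau_i^{-1}\alpha I)$. Since $A_i$ is self-adjoint with compact resolvent, the union over $\alpha$ is an orthonormal basis of $\mathcal H_i$.

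\emph{Step 2: define the transport.} Let $U_i:\ell^2(\bigsqcup_\alpha\{1,\dots,d_\alpha\})\to\mathcal H_i$ be the induced unitary, and set $K_{ij}=U_iU_j^{-1}$. Then $K_{ij}$ is unitary, hence bounded and invertible. The cocycle identity $K_{ij}K_{jk}=U_iU_j^{-1}U_jU_k^{-1}=K_{ik}$ is immediate, and this coboundary form is the reason the choice is made.

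\emph{Step 3: intertwining.} Set $\lambda_{ij}=\tau_i/\tau_j$ and check~\eqref{intertwining} on basis vectors. We have $\mathcal S_j(t)e^{(j)}_{\alpha,r}=e^{-t\alpha/\tau_j}e^{(j)}_{\alpha,r}$. Applying $K_{ij}$ gives $e^{-t\alpha/\tau_j}e^{(i)}_{\alpha,r}$. On the other side, $\mathcal S_i(\lambda_{ij}t)K_{ij}e^{(j)}_{\alpha,r}=e^{-\lambda_{ij}t\alpha/\tau_i}e^{(i)}_{\alpha,r}$, which is the same value. Both sides are bounded operators agreeing on an orthonormal basis, so they are equal.

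The simple-eigenvalue remark follows because then all $d_\alpha=1$, so the multiplicity condition is automatic.

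The main obstacle is bookkeeping rather than analysis. One issue is that in (i) the $\tau_i$ are said to exist but the $\lambda_{ij}$ are not stated to equal $\tau_i/\tau_j$, so I would read the intertwining in (i) with $\lambda_{ij}$ arbitrary and obtain the $\tau_i$ from Theorem~\ref{thm:gauge-representation}. The other is to make sure the eigenbases are complete, which relies on the spectral theorem for compact-resolvent self-adjoint operators. Separability of the $\mathcal H_i$ guarantees that the index set of the basis is countable, consistent across $i$.
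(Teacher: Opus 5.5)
Your proposal is correct and essentially follows the paper's proof. For (i)$\Rightarrow$(ii) you use gauge representation, global spectral compatibility and a two-way eigenspace-dimension comparison; for (ii)$\Rightarrow$(i) you use the coboundary construction $K_{ij}=U_iU_j^{-1}$, which is the paper's $\bigoplus_\alpha V_{i,\alpha}V_{j,\alpha}^{-1}$ with an abstract model space in place of the reference fiber $\mathcal H_0$, and your explicit treatment of the fact that (i) does not tie $\lambda_{ij}$ to the given $\tau_i$ addresses a point the paper leaves implicit.
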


\begin{proof}
\emph{(i)$\Rightarrow$(ii).}
Spectral equality follows from
Theorem~\ref{cor:global-spectral-compatibility}.
Since $K_{ij}$ is invertible and maps $\ker(A_j-\tau_j^{-1}\alpha I)$
into $\ker(A_i-\tau_i^{-1}\alpha I)$ by
Proposition~\ref{prop:eigenspace_transport}, while $K_{ji}=K_{ij}^{-1}$
maps in the reverse direction, the two eigenspaces are isomorphic and
therefore have equal dimension.

\emph{(ii)$\Rightarrow$(i).}
Fix $\{\tau_i\}$ as in~(ii).
For each $\alpha\in\sigma(\tau_i A_i)$ let
$E_{i,\alpha}=\ker(A_i-\tau_i^{-1}\alpha I)$.
Fix a reference index $0\in I$ and for every $i\in I$ and $\alpha$
choose a unitary isomorphism
$V_{i,\alpha}\colon E_{0,\alpha}\to E_{i,\alpha}$ (possible since
$\dim E_{i,\alpha}=\dim E_{0,\alpha}$ by hypothesis).
Define
\begin{equation}
  K_{ij}:=\bigoplus_{\alpha\in\sigma(\tau_i A_i)}
           V_{i,\alpha}\circ V_{j,\alpha}^{-1}
  \colon\mathcal H_j\to\mathcal H_i.
\end{equation}
Since $\{E_{j,\alpha}\}_\alpha$ is an orthogonal decomposition of
$\mathcal H_j$, the operator $K_{ij}$ is bounded and unitary, hence
invertible with $K_{ij}^{-1}=K_{ji}$.

\textit{Cocycle.}
For $\phi\in E_{k,\alpha}$:
$K_{ij}K_{jk}\phi
 =V_{i,\alpha}V_{j,\alpha}^{-1}V_{j,\alpha}V_{k,\alpha}^{-1}\phi
 =V_{i,\alpha}V_{k,\alpha}^{-1}\phi
 =K_{ik}\phi$.

\textit{Intertwining.}
For $\phi\in E_{j,\alpha}$, so $A_j\phi=\tau_j^{-1}\alpha\phi$,
\begin{equation}
  K_{ij}\mathcal S_j(t)\phi
  =e^{-t\tau_j^{-1}\alpha}K_{ij}\phi
  =e^{-(\tau_i/\tau_j)t\cdot\tau_i^{-1}\alpha}K_{ij}\phi
  =\mathcal S_i\!\left(\tfrac{\tau_i}{\tau_j}t\right)K_{ij}\phi.
\end{equation}
Since eigenvectors of $A_j$ span $\mathcal H_j$ (compact resolvent),
the identity extends to all of $\mathcal H_j$ by density.
\end{proof}

With the spectral algebra of the cocycle established, we turn to the
observable built from the mixture of semigroup evolutions.

\section{Multi-Semigroup Mixture Observables}
\label{sec:mixtures}

The cocycle operators $\{K_{0i}\}$ map each sector's evolution into a
common output space, enabling a single observable to aggregate
contributions from all sectors. Proposition~\ref{prop:mixture-expansion}
gives the modal expansion of this mixture as an infinite exponential
series; under finite spectral support it collapses to a finite
exponential sum, the form taken up in Section~\ref{sec:identifiability}.

Fix a reference sector, indexed by $0$.

Let $\mathcal B_0 : \mathcal H_0 \to \mathcal Y$
be a bounded observation operator. For each $i=1,\dots,N$, let
$\psi_i\in\mathcal H_i$ and $w_i>0$, and define
\begin{equation}
M(t)
=
\sum_{i=1}^N
w_i\,
\mathcal B_0
\left(
K_{0i}\mathcal S_i(t)\psi_i
\right),
\qquad t\ge 0.
\label{mixture}
\end{equation}

Let $\{(\alpha_{i,n},\phi_{i,n})\}_{n\ge1}$ be an orthonormal eigenbasis of
$A_i$, so $A_i\phi_{i,n}=\alpha_{i,n}\phi_{i,n}$, $\alpha_{i,n}>0$ and
$\alpha_{i,n}\to\infty$. Write
\begin{equation}
\psi_i=\sum_{n\ge1}\xi_{i,n}\phi_{i,n},
\qquad
\xi_{i,n}:=\langle\psi_i,\phi_{i,n}\rangle_{\mathcal H_i}.
\end{equation}
Define
\begin{equation}
\mu_{i,n}:=\alpha_{i,n},
\qquad
b_{i,n}:=\mathcal B_0K_{0i}\phi_{i,n}\in\mathcal Y.
\end{equation}

\begin{proposition}[Modal expansion of the mixture observable]
\label{prop:mixture-expansion}
For every $t\ge0$,
\begin{equation}
M(t)=\sum_{i=1}^N\sum_{n\ge1}
 w_i\,\xi_{i,n}\,e^{-\mu_{i,n}t}\,b_{i,n}
\quad\text{in }\mathcal Y.
\label{eq:mixture-modal-expansion}
\end{equation}
For every $\tau>0$, the series in \eqref{eq:mixture-modal-expansion}
converges uniformly in $t\in[\tau,\infty)$ in $\mathcal Y$.
\end{proposition}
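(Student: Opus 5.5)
The plan is to expand each sector's state in its eigenbasis, push the expansion through the bounded operators $K_{0i}$ and $\mathcal B_0$, and then control the tail uniformly on $[\tau,\infty)$ with a Cauchy--Schwarz bound.

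\emph{Pointwise expansion.} Fix $t\ge0$ and a sector $i$. Since $\{\phi_{i,n}\}_n$ is an orthonormal basis of $\mathcal H_i$ and $\mathcal S_i(t)=e^{-tA_i}$ is defined by the spectral calculus, we have
\[
\mathcal S_i(t)\psi_i=\sum_{n\ge1}\xi_{i,n}e^{-\alpha_{i,n}t}\phi_{i,n}.
\]
This series converges in $\mathcal H_i$, because its coefficients are dominated by $|\xi_{i,n}|$ and $(\xi_{i,n})\in\ell^2$ by Parseval. The composite $\mathcal B_0K_{0i}\colon\mathcal H_i\to\mathcal Y$ is bounded and linear, so it commutes with norm-convergent series. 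This gives $\mathcal B_0K_{0i}\mathcal S_i(t)\psi_i=\sum_n\xi_{i,n}e^{-\mu_{i,n}t}b_{i,n}$ in $\mathcal Y$. Multiplying by $w_i$ and summing over the finitely many $i=1,\dots,N$ yields \eqref{eq:mixture-modal-expansion}. The double series is read as a finite sum over $i$ of series in $n$; since $N$ is finite, no rearrangement issue arises.

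\emph{Uniform convergence on $[\tau,\infty)$.} It suffices to treat each $i$ separately. Write $C_i:=\|\mathcal B_0\|\,\|K_{0i}\|$, so that $\|b_{i,n}\|_{\mathcal Y}\le C_i$. For the tail beyond $m$, apply $\mathcal B_0K_{0i}$ to the $\mathcal H_i$-valued tail rather than estimating the $b_{i,n}$ termwise; this avoids needing $\sum|\xi_{i,n}|<\infty$:
\[
\Big\|\sum_{n>m}\xi_{i,n}e^{-\mu_{i,n}t}b_{i,n}\Big\|_{\mathcal Y}
\le C_i\Big(\sum_{n>m}|\xi_{i,n}|^2e^{-2\alpha_{i,n}t}\Big)^{1/2}
\le C_i\Big(\sum_{n>m}|\xi_{i,n}|^2\Big)^{1/2}.
\]
The last bound uses $\alpha_{i,n}>0$, so $e^{-2\alpha_{i,n}t}\le1$. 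The right-hand side does not depend on $t$ and tends to $0$ as $m\to\infty$. This gives uniform convergence on $[0,\infty)$, and a fortiori on $[\tau,\infty)$.

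A sharper, $\tau$-dependent rate is available if wanted. Since $\alpha_{i,n}\to\infty$, for $t\ge\tau$ we can bound $e^{-\alpha_{i,n}t}\le e^{-\alpha_{i,m}\tau}$ for $n>m$. This assumes the eigenvalues are ordered increasingly, or more generally uses $\inf_{n>m}\alpha_{i,n}$, which tends to $\infty$.

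The main point needing care is the step from convergence in $\mathcal H_i$ to convergence in $\mathcal Y$. It must go through boundedness of $\mathcal B_0K_{0i}$ applied to the whole tail vector, which is why the $\mathcal H_i$-norm bound is used. Beyond that, the argument only needs the spectral-calculus identity for $e^{-tA_i}$ on an orthonormal eigenbasis. That identity is available because $A_i$ is self-adjoint with compact resolvent.
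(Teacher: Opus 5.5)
Your proof is correct and follows the paper's argument: expand $\mathcal S_i(t)\psi_i$ in the orthonormal eigenbasis by spectral calculus, push the series through the bounded map $\mathcal B_0K_{0i}$, sum over the finitely many sectors, and bound the tail by $\|\mathcal B_0K_{0i}\|$ times the $\mathcal H_i$-norm of the tail vector using orthonormality (Parseval rather than Cauchy--Schwarz). The one difference is that you drop the factor $e^{-2\alpha_{i,n}t}\le 1$ instead of replacing it by $e^{-2\alpha_{i,n}\tau}$ as the paper does, and this correctly gives uniform convergence on all of $[0,\infty)$, slightly more than the statement claims.
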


\begin{proof}
For each fixed $i$, spectral calculus for self-adjoint operators with
compact resolvent gives~\cite{pazy1983,engel-nagel}
\begin{equation}
\mathcal S_i(t)\psi_i
=
\sum_{n\ge1}e^{-\alpha_{i,n}t}\xi_{i,n}\phi_{i,n}
\quad\text{in }\mathcal H_i,\ t\ge0.
\end{equation}
Applying the bounded map $\mathcal B_0K_{0i}$ yields
\begin{equation}
\mathcal B_0K_{0i}\mathcal S_i(t)\psi_i
=
\sum_{n\ge1}e^{-\alpha_{i,n}t}\xi_{i,n}\,\mathcal B_0K_{0i}\phi_{i,n}
=
\sum_{n\ge1}\xi_{i,n}e^{-\alpha_{i,n}t}b_{i,n}
\end{equation}
in $\mathcal Y$. Summing over $i=1,\dots,N$ gives
\eqref{eq:mixture-modal-expansion}; rewriting the exponents as
$e^{-\mu_{i,n}t}$ uses $\mu_{i,n}=\alpha_{i,n}$.

For uniform convergence on $[\tau,\infty)$, let
\begin{equation}
R_{i,N}(t):=\sum_{n>N}\xi_{i,n}e^{-\alpha_{i,n}t}b_{i,n}.
\end{equation}
Then for all $t\ge\tau$,
\begin{equation}
\|R_{i,N}(t)\|_{\mathcal Y}
\le
\|\mathcal B_0K_{0i}\|
\left\|\sum_{n>N}\xi_{i,n}e^{-\alpha_{i,n}t}\phi_{i,n}\right\|_{\mathcal H_i}
\le
\|\mathcal B_0K_{0i}\|
\left(\sum_{n>N}|\xi_{i,n}|^2e^{-2\alpha_{i,n}\tau}\right)^{1/2}.
\end{equation}
The right-hand side tends to $0$ as $N\to\infty$, so each sector series is
uniformly convergent on $[\tau,\infty)$. Since the number of sectors is finite,
the full double series is uniformly convergent on $[\tau,\infty)$.
\end{proof}

\begin{corollary}[Finite exponential model]
\label{cor:finite-mixture-model}
Assume $\mathcal Y=\mathbb C$ and each $\psi_i$ has finite spectral support.
Then \eqref{mixture} reduces to
\begin{equation}
M(t)=\sum_{\ell=1}^L a_\ell e^{-\mu_\ell t},
\end{equation}
for finitely many pairs $(\mu_\ell,a_\ell)$; after merging equal rates,
the exponents $\mu_\ell$ are pairwise distinct.
\end{corollary}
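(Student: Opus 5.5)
The plan is to specialise the modal expansion of Proposition~\ref{prop:mixture-expansion} to $\mathcal Y=\mathbb C$ and then exploit finite spectral support to truncate the double series. Finite spectral support of $\psi_i$ means there is a finite set $F_i\subset\mathbb N$ with $\xi_{i,n}=0$ for $n\notin F_i$. With $\mathcal Y=\mathbb C$, each $b_{i,n}=\mathcal B_0K_{0i}\phi_{i,n}$ is a complex scalar. Then \eqref{eq:mixture-modal-expansion} becomes a finite sum
\begin{equation*}
M(t)=\sum_{i=1}^N\sum_{n\in F_i} w_i\,\xi_{i,n}\,b_{i,n}\,e^{-\mu_{i,n}t},
\qquad t\ge0 .
\end{equation*}
Since all but finitely many terms vanish, the convergence statement of Proposition~\ref{prop:mixture-expansion} is not needed beyond pointwise equality for each $t\ge0$. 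Alternatively, I would bypass the series entirely: for $\psi_i=\sum_{n\in F_i}\xi_{i,n}\phi_{i,n}$ one has $\mathcal S_i(t)\psi_i=\sum_{n\in F_i}e^{-\alpha_{i,n}t}\xi_{i,n}\phi_{i,n}$ exactly, and linearity of the bounded map $\mathcal B_0K_{0i}$ gives the same identity.

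Next I will index the finite set of pairs $\{(i,n): 1\le i\le N,\ n\in F_i\}$. The rates $\mu_{i,n}$ take finitely many distinct values; I list them as $\mu_1,\dots,\mu_L$. For each $\ell$ I set
\begin{equation*}
a_\ell:=\sum_{(i,n):\,\mu_{i,n}=\mu_\ell} w_i\,\xi_{i,n}\,b_{i,n}.
\end{equation*}
Regrouping the finite sum by rate then gives $M(t)=\sum_{\ell=1}^L a_\ell e^{-\mu_\ell t}$ with pairwise distinct exponents. This regrouping is legitimate because the sum is finite, so reordering is harmless.

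There is no real obstacle; the only points needing care are bookkeeping ones. The merged amplitudes $a_\ell$ may vanish after merging: coinciding rates from different sectors can cancel. The statement does not require $a_\ell\neq0$, so I would simply note that discarding zero amplitudes keeps the form while preserving distinctness. In the degenerate case where every amplitude vanishes, I take $L=0$ and the empty sum.
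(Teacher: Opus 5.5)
Your proposal is correct and follows the same route as the paper: you truncate the modal expansion of Proposition~\ref{prop:mixture-expansion} using finite spectral support, then group equal exponents to obtain pairwise distinct rates. Your extra remarks add bookkeeping the paper leaves implicit but change nothing in the argument: the direct computation that bypasses the series, and the possibility that merged amplitudes cancel (including the empty-sum case $L=0$).
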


\begin{proof}
Under finite spectral support, each inner series in
\eqref{eq:mixture-modal-expansion} is finite, hence $M$ is a finite linear
combination of exponentials. Grouping equal exponents yields the claimed form
with distinct rates.
\end{proof}

\begin{remark}
Finite spectral support --- each $\psi_i$ being a finite linear combination
of eigenvectors --- is a modeling assumption standard in Prony-type
analysis~\cite{potts-tasche,plonka-tasche2014}. A generic element of
$\mathcal H_i$ carries infinite spectral support, and the infinite series
in Proposition~\ref{prop:mixture-expansion} does not reduce to a finite
exponential sum. Extending the identification and stability theory to the
infinite-dimensional case requires regularization (e.g., truncation with
an a priori mode-count bound) and is left for future work.
\end{remark}

The finite exponential form of Corollary~\ref{cor:finite-mixture-model}
is precisely the structure accessible to Prony-type reconstruction; we
now ask when and how the parameters $\{(\mu_\ell,a_\ell)\}$ can be
recovered uniquely from $M$.

\section{Identifiability of Mixtures}
\label{sec:identifiability}

Identifiability of $M$ has two layers. The first --- uniqueness of the
modal pairs $\{(\mu_\ell,a_\ell)\}$ --- is classical for exponential
sums with distinct rates. The second is specific to the multi-semigroup
structure: spectral separation forces each recovered rate $\mu_\ell$
into exactly one sector, yielding a unique sector assignment $i(\ell)$.
Theorem~\ref{thm:identifiability} establishes both layers and, under
eigenspace observability, recovers the active eigenspace components.

\begin{definition}[Spectral Separation Condition]
The family $\{A_i\}$ satisfies spectral separation if
\begin{equation}
\sigma(A_i)
\cap
\sigma(A_j)
=
\varnothing
\quad \text{for all } i \neq j.
\end{equation}
\end{definition}

\begin{definition}[Sector observability on eigenspaces]
\label{def:sector-observability}
For each sector $i$, let $E_{i,\alpha}=\ker(A_i-\alpha I)$.
We say that $\mathcal B_0K_{0i}$ is \emph{observable on eigenspaces} if
\begin{equation}
\mathcal B_0K_{0i}\big|_{E_{i,\alpha}} \text{ is injective for every }\alpha\in\sigma_p(A_i).
\end{equation}
\end{definition}

\begin{remark}
When $\mathcal Y=\mathbb C$, a linear map $E_{i,\alpha}\to\mathbb C$ cannot be
injective unless $\dim E_{i,\alpha}\le 1$. Definition~\ref{def:sector-observability}
is therefore non-vacuous in the scalar-output case only when every active
eigenvalue is simple ($\dim E_{i,\alpha}=1$); the condition then reduces to
$\mathcal B_0K_{0i}\phi_{i,\alpha}\neq0$.
For eigenvalues of higher multiplicity, one must either take $\mathcal Y$
of dimension at least $\max_\alpha\dim E_{i,\alpha}$, or restrict the
reconstruction target to the one-dimensional projection
$\langle P_{i,\alpha}\psi_i,\phi_{i,\alpha}\rangle\phi_{i,\alpha}$ along a
chosen basis vector $\phi_{i,\alpha}$.
Throughout Theorems~\ref{thm:identifiability} and~\ref{thm:stability},
the scalar case $\mathcal Y=\mathbb C$ is applied under the standing
assumption that all active eigenvalues are simple.
\end{remark}

\begin{theorem}[Identifiability and sector tagging]
\label{thm:identifiability}
Assume $\mathcal Y=\mathbb C$ and that the observable is a finite exponential sum
\begin{equation}
M(t)=\sum_{\ell=1}^L a_\ell e^{-\mu_\ell t},
\qquad \mu_\ell\neq \mu_{\ell'} \ (\ell\neq \ell').
\end{equation}
Assume spectral separation.
Then the modal tuples
\begin{equation}
\{(\mu_\ell,a_\ell)\}_{\ell=1}^L
\end{equation}
are uniquely determined by $M$ (up to permutation of the modal pairs).
Moreover, for each $\ell$ there exists a unique sector index $i(\ell)$ and
unique eigenvalue $\alpha_\ell\in\sigma_p(A_{i(\ell)})$ such that
\begin{equation}
\mu_\ell=\alpha_\ell.
\end{equation}
Hence the tagged family
\begin{equation}
\{(i(\ell),\alpha_\ell,a_\ell)\}_{\ell=1}^L
\end{equation}
is uniquely determined by $M$.

If, in addition, Definition~\ref{def:sector-observability} holds and
$\{w_i\}_{i=1}^N$ is known, then each active eigenspace component
$P_{i,\alpha}\psi_i$ is uniquely recovered from the coefficient attached to
rate $\alpha$.
\end{theorem}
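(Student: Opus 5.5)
The proof splits into three layers, carried out in order: uniqueness of the modal pairs, sector tagging, and recovery of eigenspace components.

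\emph{Layer 1: uniqueness of the modal pairs.} I would use linear independence of distinct exponentials on any interval. Suppose $M(t)=\sum_{\ell=1}^L a_\ell e^{-\mu_\ell t}=\sum_{m=1}^{L'} a'_m e^{-\mu'_m t}$ for all $t\ge0$, with all $a_\ell,a'_m\neq0$ and each family of rates pairwise distinct. Subtracting gives a finite exponential sum that vanishes identically. Merge coinciding rates from the two sides and order the distinct rates $\nu_1<\dots<\nu_R$. Multiply by $e^{\nu_1 t}$ and let $t\to\infty$; this kills the leading coefficient. Induction on $R$ then shows every merged coefficient is zero; equivalently, one can invoke nonsingularity of the Vandermonde matrix in the nodes $e^{-\nu_r}$ at the sample times $t=0,1,\dots,R-1$. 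Consequently the sets $\{(\mu_\ell,a_\ell)\}$ and $\{(\mu'_m,a'_m)\}$ coincide up to permutation. I would state explicitly the normalization $a_\ell\neq0$, obtained after discarding null terms, since uniqueness fails without it.

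\emph{Layer 2: sector tagging.} By Proposition~\ref{prop:mixture-expansion} and Corollary~\ref{cor:finite-mixture-model}, every rate $\mu_\ell$ carrying a nonzero coefficient arises as some $\alpha_{i,n}\in\sigma_p(A_i)$, because $\mu_{i,n}=\alpha_{i,n}$. Existence of a sector $i(\ell)$ with $\mu_\ell\in\sigma(A_{i(\ell)})$ follows. Uniqueness follows from spectral separation: if $\mu_\ell\in\sigma(A_i)\cap\sigma(A_j)$, then $i=j$. The eigenvalue $\alpha_\ell=\mu_\ell$ is then trivially unique. Since the tag is a deterministic function of $\mu_\ell$ and the known spectra, the tagged family is determined by $M$.

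\emph{Layer 3: recovery of eigenspace components.} This is the main obstacle, because it requires correctly identifying which terms of the modal expansion merge into each $a_\ell$. Fix $\ell$ with $i=i(\ell)$ and $\alpha=\alpha_\ell$. Because rates from different sectors are disjoint, the only contributions at rate $\alpha$ come from sector $i$. Grouping the eigenbasis terms with $\alpha_{i,n}=\alpha$ gives
\begin{equation}
a_\ell=w_i\,\mathcal B_0K_{0i}P_{i,\alpha}\psi_i,
\end{equation}
where $P_{i,\alpha}$ is the orthogonal projection onto $E_{i,\alpha}$. Since $w_i>0$ is known, $\mathcal B_0K_{0i}P_{i,\alpha}\psi_i=a_\ell/w_i$. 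Injectivity of $\mathcal B_0K_{0i}$ on $E_{i,\alpha}$ (Definition~\ref{def:sector-observability}) then determines $P_{i,\alpha}\psi_i\in E_{i,\alpha}$ uniquely. Under the standing simple-eigenvalue convention for $\mathcal Y=\mathbb C$, this reads explicitly as $P_{i,\alpha}\psi_i=\frac{a_\ell}{w_i\,\mathcal B_0K_{0i}\phi_{i,\alpha}}\phi_{i,\alpha}$.

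Finally, rates $\alpha$ absent from the recovered list require a separate remark: for them the grouped coefficient vanishes. The injectivity hypothesis then forces $P_{i,\alpha}\psi_i=0$, so inactive components are recovered as well.
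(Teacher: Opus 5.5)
Your proposal is correct and follows the same route as the paper's proof. The three steps match: linear independence of distinct exponentials via the leading-rate limit argument, sector tagging by disjointness of the spectra, and recovery of $P_{i,\alpha}\psi_i$ from the coefficient identity $a_{i,\alpha}=w_i\,\mathcal B_0K_{0i}P_{i,\alpha}\psi_i$ together with eigenspace injectivity. Your additions are sound refinements of points the paper leaves implicit: the explicit normalization $a_\ell\neq0$, deriving existence of the tag from the modal expansion rather than assuming it, and the remark that inactive components vanish, which you should justify by applying Layer~1 to the merged finite modal expansion.
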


\begin{proof}
Uniqueness of the finite exponential representation follows from linear
independence of exponentials with distinct rates. For completeness, suppose
\begin{equation}
\sum_{\ell=1}^L c_\ell e^{-\mu_\ell t}=0
\quad\text{for all }t\ge0,
\qquad
\mu_1<\cdots<\mu_L.
\end{equation}
Multiplying by $e^{\mu_1 t}$ and sending $t\to\infty$ yields $c_1=0$.
Repeating inductively gives $c_\ell=0$ for all $\ell$, proving linear
independence and hence uniqueness of $\{(\mu_\ell,a_\ell)\}$ up to permutation.

By spectral separation, the sets $\sigma(A_i)$ are pairwise
disjoint. Therefore each recovered rate $\mu_\ell$ belongs to exactly one such
set, which determines a unique sector $i(\ell)$. Inside this sector,
$\alpha_\ell:=\mu_\ell$ is uniquely determined and belongs
to $\sigma_p(A_{i(\ell)})$. This proves uniqueness of the tagged triples.

For the last claim, fix an active pair $(i,\alpha)$ and denote by
$P_{i,\alpha}$ the orthogonal projection onto
$E_{i,\alpha}=\ker(A_i-\alpha I)$. The coefficient attached to rate
$\alpha$ equals
\begin{equation}
a_{i,\alpha}
=
w_i\,\mathcal B_0K_{0i}\!\left(P_{i,\alpha}\psi_i\right).
\end{equation}
If $w_i$ is known and $\mathcal B_0K_{0i}\!\restriction_{E_{i,\alpha}}$ is
injective, then $P_{i,\alpha}\psi_i\in E_{i,\alpha}$ is uniquely determined by
$a_{i,\alpha}$.
\end{proof}

Identifiability guarantees uniqueness of the parameters; the next
question is how to reconstruct them from finitely many samples of $M$.

\section{Finite-Window Reconstruction}
\label{sec:finite-window}

Given $2L$ samples of $M$, the modal data $\{(\mu_\ell,a_\ell)\}$ can
be recovered exactly via a Hankel-matrix method~\cite{potts-tasche,plonka-tasche2014,plonka-potts-steidl-tasche2019}.
The key algebraic fact is that the moment Hankel matrix admits the
factorization $H=VDV^\top$ with $V$ Vandermonde; invertibility follows
from distinctness of rates and non-vanishing of amplitudes.

Let $0<T<\infty$ and assume $M$ is sampled on $[0,T]$.

\begin{theorem}[Finite-window reconstruction]
\label{thm:finite-window}
Assume
\begin{equation}
M(t)=\sum_{\ell=1}^L a_\ell e^{-\mu_\ell t},
\qquad
\mu_\ell>0,\ \mu_\ell\neq\mu_{\ell'}\ (\ell\neq\ell'),\ a_\ell\neq0.
\end{equation}
Then the following hold.
\begin{align*}
\text{(i)}\quad &
\{(\mu_\ell,a_\ell)\}_{\ell=1}^L
\text{ is uniquely determined by }
\{M^{(n)}(0)\}_{n=0}^{2L-1},\\
\text{(ii)}\quad &
\text{for any }h\in(0,T/(2L-1)],
\text{ the same modal data is uniquely determined by }\\
&\{M(0),M(h),\dots,M((2L-1)h)\}.
\end{align*}
\end{theorem}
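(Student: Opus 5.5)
Both parts reduce to the same algebraic problem. We have a sequence of the form
\[
s_n=\sum_{\ell=1}^L a_\ell z_\ell^n ,\qquad n=0,\dots,2L-1,
\]
with distinct nodes $z_\ell$ and nonzero weights $a_\ell$. We must show that this sequence determines $\{(z_\ell,a_\ell)\}$, and then pass from the nodes back to the rates.

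For (i), differentiating the exponential sum gives $M^{(n)}(0)=\sum_\ell a_\ell(-\mu_\ell)^n$, so $z_\ell=-\mu_\ell$. These nodes are distinct because the $\mu_\ell$ are, and the map $z\mapsto\mu=-z$ is injective.

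For (ii), the samples are $M(nh)=\sum_\ell a_\ell (e^{-\mu_\ell h})^n$, so $z_\ell=e^{-\mu_\ell h}\in(0,1)$. These nodes are again distinct, since $\mu\mapsto e^{-\mu h}$ is strictly monotone for $h>0$. The rate is recovered as $\mu_\ell=-h^{-1}\log z_\ell$, and no aliasing can occur because the nodes are real and positive. The constraint $h\le T/(2L-1)$ only guarantees that all sample times lie in $[0,T]$.

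The core step is a Prony/Hankel argument. Form the $L\times L$ Hankel matrix $H=(s_{m+n})_{m,n=0}^{L-1}$. Let $V=(z_\ell^{m})_{m,\ell}$ be the $L\times L$ Vandermonde matrix and $D=\mathrm{diag}(a_\ell)$; then $H=VDV^\top$. Since the nodes are distinct, $V$ is invertible, and since every $a_\ell\ne0$, $D$ is invertible. Hence $H$ is invertible.

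Let $p(z)=\prod_\ell(z-z_\ell)=z^L+\sum_{k<L}c_kz^k$. For each $m=0,\dots,L-1$,
\[
\sum_\ell a_\ell z_\ell^m p(z_\ell)=0 ,
\]
which expands to the linear system $Hc=-(s_L,\dots,s_{2L-1})^\top$. The largest index involved is $2L-1$, so only the given data enter. Because $H$ is invertible, $c$ is uniquely determined by $\{s_n\}_{n=0}^{2L-1}$. The nodes are then the roots of $p$, and the amplitudes are the unique solution of the Vandermonde system $Va=(s_0,\dots,s_{L-1})^\top$.

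The main obstacle is the uniqueness claim itself. Suppose a second admissible model with $L$ distinct nodes and nonzero amplitudes reproduces the same $2L$ data. We need to know that it produces the same coefficient vector $c$. This holds because the above argument applies verbatim to that model: its own $H$ is built from the same $s_n$, so it is the same invertible matrix, and it yields the same right-hand side. Therefore both models have the same $c$, hence the same polynomial $p$, the same node set, and the same amplitudes up to permutation.

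I would state explicitly that $L$ is assumed known. With unknown order, a model with fewer terms could also fit, so uniqueness is asserted among sums with exactly $L$ terms. The statement's phrasing suggests that $L$ is known, so this is not a gap in the claim.
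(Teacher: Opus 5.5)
Your proposal is correct and follows the paper's proof essentially step for step: Hankel factorization $H=VDV^\top$, unique Prony coefficients, nodes as roots of $p$, a Vandermonde solve for the amplitudes, and $\mu_\ell=-h^{-1}\log z_\ell$ in (ii). Using the nodes $-\mu_\ell$ in (i) instead of the paper's sign-normalized moments $(-1)^nM^{(n)}(0)$ is immaterial, and your explicit competing-model argument only spells out what the paper leaves implicit. One correction to your closing caveat: invertibility of $H$ rules out any model with fewer than $L$ terms, since such a model would produce a Hankel matrix of rank $<L$; uniqueness therefore holds among all sums with at most $L$ terms, and it is models with more than $L$ terms that could fit the $2L$ data when the order is unknown.
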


\begin{proof}
Set
\begin{equation}
m_n:=(-1)^nM^{(n)}(0)=\sum_{\ell=1}^L a_\ell \mu_\ell^n,
\qquad n\ge0.
\end{equation}
Let
\begin{equation}
p(z):=\prod_{\ell=1}^L(z-\mu_\ell)
=z^L+c_{L-1}z^{L-1}+\cdots+c_0.
\end{equation}
Then for every $n\ge0$,
\begin{equation}
\sum_{k=0}^L c_k m_{n+k}=0,
\qquad c_L:=1.
\end{equation}
Hence the coefficient vector $(c_0,\dots,c_{L-1})$ solves the linear system
\begin{equation}
\sum_{k=0}^{L-1} c_k m_{n+k}=-m_{n+L},
\qquad n=0,\dots,L-1.
\end{equation}
Its matrix is the Hankel matrix
\begin{equation}
H=(m_{r+s})_{r,s=0}^{L-1}.
\end{equation}
Write $V=(\mu_\ell^{\,r})_{r=0,\dots,L-1;\,\ell=1,\dots,L}$ and
$D=\mathrm{diag}(a_1,\dots,a_L)$. Then
\begin{equation}
H=VDV^\top.
\end{equation}
Since the $\mu_\ell$ are pairwise distinct, $V$ is invertible; since $a_\ell\neq0$,
$D$ is invertible. Therefore $H$ is invertible, so $(c_0,\dots,c_{L-1})$ is
uniquely determined by $m_0,\dots,m_{2L-1}$.
Consequently $p$ is uniquely determined, and its roots are exactly the rates
$\{\mu_\ell\}_{\ell=1}^L$.

With $\mu_\ell$ known, the amplitudes are the unique solution of the
Vandermonde system
\begin{equation}
\sum_{\ell=1}^L a_\ell \mu_\ell^n=m_n,
\qquad n=0,\dots,L-1.
\end{equation}
Thus statement (i) holds.

For (ii), fix $h\in(0,T/(2L-1)]$ and define
\begin{equation}
y_n:=M(nh)=\sum_{\ell=1}^L a_\ell z_\ell^n,
\qquad
z_\ell:=e^{-\mu_\ell h}\in(0,1).
\end{equation}
The numbers $z_\ell$ are pairwise distinct because the $\mu_\ell$ are.
Applying the same Hankel-Prony argument~\cite{potts-tasche} to $\{y_n\}_{n=0}^{2L-1}$ yields unique
$\{(z_\ell,a_\ell)\}_{\ell=1}^L$. Finally
\begin{equation}
\mu_\ell=-\frac1h\log z_\ell
\end{equation}
is uniquely defined since $z_\ell\in(0,1)$. Hence (ii) follows.
\end{proof}

Exact reconstruction relies on noiseless samples. The next section
quantifies how perturbations in the data propagate to errors in the
recovered modal parameters and sector tags.

\section{Stability of Reconstruction}
\label{sec:stability}

Stability of reconstruction is controlled by three operator-theoretic
quantities: the Prony conditioning number $\kappa_{\mathrm{exp}}$, the
active inter-sector gap $\Delta_{\mathrm{gap}}$, and the eigenspace
observability bound $\|T_{i,\alpha}^{-1}\|$. Theorem~\ref{thm:stability}
shows that for noise level $\varepsilon$ below a threshold set by the
spectral geometry, all reconstructed parameters --- rates, amplitudes,
sector tags, and eigenspace components --- are stable with error
$O(\kappa_{\mathrm{exp}}\varepsilon)$.

Since the parameters $\theta$, $\Theta$ and their decorated variants
appear in close succession below, we fix the conventions once and for
all: $\theta$ denotes the Prony-level parameter (nodes and amplitudes),
and $\Theta$ the full structured parameter (rates, amplitudes,
sector/eigenvalue tags, and active eigenspace components); hats
($\widehat\theta$, $\widehat\Theta$) denote reconstructions from noisy
data, and a subscript $\ast$ ($\theta_\ast$, $\Theta_\ast$) marks
ground-truth values. Table~\ref{tab:notation} summarizes the principal
notation of this section.

\begin{table}[ht]
\centering
\renewcommand{\arraystretch}{1.25}
\begin{tabular}{@{}>{$}l<{$}>{\raggedright\arraybackslash}p{0.52\linewidth}@{}}
\toprule
\multicolumn{1}{@{}l}{Symbol} & Meaning\\
\midrule
\theta=(z_1,\dots,z_L,a_1,\dots,a_L) & Prony-level parameter: nodes $z_\ell=e^{-\mu_\ell h}$ and amplitudes $a_\ell$\\
\Theta & full parameter: rates, amplitudes, sector/eigenvalue tags, active eigenspace components\\
\theta_\ast,\ \Theta_\ast & ground-truth values\\
\widehat\theta,\ \widehat\Theta & reconstructions from noisy data\\
\addlinespace
\mathcal F,\ J_\ast=D\mathcal F(\theta_\ast) & Prony map and its Jacobian at $\theta_\ast$\\
\kappa_{\mathrm{exp}}=\|J_\ast^{-1}\| & Prony conditioning number\\
\Delta_{\mathrm{gap}} & active inter-sector spectral gap\\
\|T_{i,\alpha}^{-1}\| & eigenspace observability bound\\
\addlinespace
\varepsilon,\ \varepsilon_0 & noise level; sector-tagging threshold\\
h & sampling step\\
\bottomrule
\end{tabular}
\smallskip
\caption{Principal notation for the stability analysis.}
\label{tab:notation}
\end{table}

Fix $h\in(0,T/(2L-1)]$ and define the exact sample vector
\begin{equation}
y=(M(0),M(h),\dots,M((2L-1)h))\in\mathbb C^{2L}.
\end{equation}
Let noisy data be
\begin{equation}
y^\varepsilon=y+\delta,
\qquad
\|\delta\|_{\ell^2}\le\varepsilon.
\end{equation}

Write
\begin{equation}
M(t)=\sum_{\ell=1}^L a_\ell e^{-\mu_\ell t},
\qquad
z_\ell=e^{-\mu_\ell h}\in(0,1),
\end{equation}
and set
\begin{equation}
\theta=(z_1,\dots,z_L,a_1,\dots,a_L)\in\mathbb C^{2L}.
\end{equation}
Define the Prony map
\begin{equation}
\mathcal F(\theta)
=
\left(
\sum_{\ell=1}^L a_\ell z_\ell^n
\right)_{n=0}^{2L-1}\in\mathbb C^{2L}.
\end{equation}
Let $i_\ast(\ell)$ denote the true sector tag of $\mu_{\ell,\ast}$ from
Theorem~\ref{thm:identifiability}, and define the active inter-sector gap
\begin{equation}
\Delta_{\mathrm{gap}}
=
\min_{1\le \ell\le L}
\mathrm{dist}\!\Big(
\mu_{\ell,\ast},
\bigcup_{j\neq i_\ast(\ell)}\sigma(A_j)
\Big).
\end{equation}

\begin{theorem}[Local stability of full reconstruction]
\label{thm:stability}
Assume the hypotheses of Theorem~\ref{thm:identifiability}. Assume additionally
that all amplitudes are nonzero and rates are pairwise distinct. Let
\begin{equation}
J_\ast=D\mathcal F(\theta_\ast),
\qquad
\kappa_{\mathrm{exp}}:=\|J_\ast^{-1}\|.
\end{equation}
Then there exist constants $\varepsilon_0>0$ and $C_{\mathrm{stab}}>0$
depending on $\theta_\ast$, $h$, $\Delta_{\mathrm{gap}}$, and observability
inversion bounds on active eigenspaces, such that for every
$0<\varepsilon\le\varepsilon_0$:

1. the inverse problem from $y^\varepsilon$ has a unique solution
$\widehat\theta$ in a neighborhood of $\theta_\ast$;
2. the reconstructed full parameter $\widehat\Theta$ (rates, amplitudes, tagged
sector/eigenvalue labels, active eigenspace components) satisfies
\begin{equation}
\|\widehat\Theta-\Theta_\ast\|_{\mathcal X}
\le
C_{\mathrm{stab}}\,\kappa_{\mathrm{exp}}\,\varepsilon.
\end{equation}
\end{theorem}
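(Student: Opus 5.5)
The plan is to apply the inverse function theorem to the Prony map $\mathcal F$ at $\theta_\ast$. Then propagate the resulting Lipschitz bound through the finitely many smooth (or locally constant) maps that assemble $\Theta$ from $\theta$.

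\emph{Step 1: invertibility of $J_\ast$.} The Jacobian $J_\ast=D\mathcal F(\theta_\ast)$ is the $2L\times2L$ confluent Vandermonde matrix. Its columns are $(z_{\ell,\ast}^n)_n$ (derivative in $a_\ell$) and $(a_{\ell,\ast}nz_{\ell,\ast}^{n-1})_n$ (derivative in $z_\ell$). It factors as a confluent Vandermonde matrix in the nodes $z_{\ell,\ast}$ times $\mathrm{diag}(1,\dots,1,a_{1,\ast},\dots,a_{L,\ast})$. Its determinant is therefore $\pm\prod_\ell a_{\ell,\ast}\prod_{\ell<\ell'}(z_{\ell,\ast}-z_{\ell',\ast})^4\neq0$, because the nodes are distinct (the rates are distinct and $t\mapsto e^{-th}$ is injective) and the amplitudes are nonzero. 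Hence $\kappa_{\mathrm{exp}}<\infty$.

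\emph{Step 2: local inversion.} $\mathcal F$ is polynomial, so $D\mathcal F$ is Lipschitz on a ball $B_r(\theta_\ast)$ with some constant $\Lambda$. The quantitative inverse function theorem (Newton–Kantorovich form) gives the following. If $r\le(2\kappa_{\mathrm{exp}}\Lambda)^{-1}$, then $\mathcal F$ is injective on $B_r$, and $\mathcal F(B_r)\supset B_{r/(2\kappa_{\mathrm{exp}})}(y)$. Moreover the local inverse $G$ satisfies $\|G(y^\varepsilon)-\theta_\ast\|\le2\kappa_{\mathrm{exp}}\|\delta\|$. Choosing $\varepsilon_0\le r/(2\kappa_{\mathrm{exp}})$ yields item~1, with $\|\widehat\theta-\theta_\ast\|\le2\kappa_{\mathrm{exp}}\varepsilon$. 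We also shrink $r$ so that on $B_r$ the nodes stay in a compact subset of $(0,1)$, remain distinct, and the amplitudes stay bounded away from $0$. Hermitian symmetry is not needed; if the $\widehat z_\ell$ become complex, we use the principal logarithm near the positive reals.

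\emph{Step 3: propagating to $\Theta$.} We handle the components one at a time.
\begin{itemize}
\item \emph{Rates.} $\widehat\mu_\ell=-h^{-1}\log\widehat z_\ell$, and $\log$ is Lipschitz with constant at most $(\min_\ell z_{\ell,\ast}-r)^{-1}$. This gives $|\widehat\mu_\ell-\mu_{\ell,\ast}|\le C_1\kappa_{\mathrm{exp}}\varepsilon$.
\item \emph{Tags.} We assign $\widehat i(\ell)$ and $\widehat\alpha_\ell$ as the nearest point of $\bigcup_j\sigma(A_j)$ to $\widehat\mu_\ell$. This union is discrete, since each $\sigma(A_j)$ is discrete and there are finitely many sectors. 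Let $\gamma$ be the minimal distance from $\mu_{\ell,\ast}$ to other points of the union, which is at most $\Delta_{\mathrm{gap}}$. Requiring $C_1\kappa_{\mathrm{exp}}\varepsilon_0<\gamma/2$ makes the tags exact: $\widehat i(\ell)=i_\ast(\ell)$ and $\widehat\alpha_\ell=\alpha_{\ell,\ast}$. The tag components therefore contribute zero error.
\item \emph{Components.} With known $w_i$, Theorem~\ref{thm:identifiability} gives $P_{i,\alpha}\psi_i=w_i^{-1}T_{i,\alpha}^{-1}a_{i,\alpha}$, where $T_{i,\alpha}=\mathcal B_0K_{0i}|_{E_{i,\alpha}}$ is injective on a finite-dimensional space. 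The error is thus at most $w_i^{-1}\|T_{i,\alpha}^{-1}\|\,|\widehat a_\ell-a_{\ell,\ast}|$.
\end{itemize}
Summing these in the norm of $\mathcal X$ (a finite product of Euclidean and finite-dimensional Hilbert norms) gives the bound with $C_{\mathrm{stab}}=2\max\{1,C_1,\max w_i^{-1}\|T_{i,\alpha}^{-1}\|\}$ times norm-equivalence constants.

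The main obstacle is Step~2. Controlling the size of the inversion neighbourhood explicitly in terms of $\kappa_{\mathrm{exp}}$ and $\Lambda$ requires a careful quantitative inverse function argument, rather than the qualitative theorem. We must also ensure that the locally unique preimage corresponds to the correct modal ordering, i.e.\ that it is unique up to permutation.
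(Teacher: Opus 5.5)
Your proposal is correct and follows essentially the same route as the paper. Both arguments show that the confluent-Vandermonde Jacobian $J_\ast$ is invertible (you via its explicit determinant, the paper via linear independence of $\{z_{\ell,\ast}^n\}$ and $\{n\,z_{\ell,\ast}^n\}$), apply the inverse function theorem at $\theta_\ast$, pass to rates through the Lipschitz logarithm, tag sectors using the gap, and bound the eigenspace components by $\|T_{i,\alpha}^{-1}\|$.

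Your version is somewhat more careful in three places. The Newton--Kantorovich step makes the paper's unspecified constant explicit ($C_2=2$). Using the principal logarithm handles possibly complex $\widehat z_\ell$. Nearest-point tagging with threshold $\gamma\le\Delta_{\mathrm{gap}}$ also fixes the eigenvalue labels within a sector, which $\Delta_{\mathrm{gap}}$ alone does not.
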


\begin{proof}
\noindent\textit{Step~1: Prony-level recovery of $(z_\ell, a_\ell)$.}
The map $\mathcal F$ is analytic in $\theta$. Its Jacobian
$J_\ast=D\mathcal F(\theta_\ast)$ has the block form
\begin{equation}
J_\ast=\bigl[\,\partial_z\mathcal F(\theta_\ast)\;\big|\;\partial_a\mathcal F(\theta_\ast)\,\bigr],
\end{equation}
where the $a$-block is the $2L\times L$ Vandermonde matrix
$(z_{\ell,\ast}^n)_{n,\ell}$ and the $z$-block involves
$(n\,a_{\ell,\ast}z_{\ell,\ast}^{n-1})_{n,\ell}$.
We verify invertibility of $J_\ast$ explicitly.
Suppose $J_\ast\begin{pmatrix}\delta z\\\delta a\end{pmatrix}=0$,
i.e., for $n=0,\dots,2L-1$,
\begin{equation}
\sum_{\ell=1}^L\bigl(n\,a_{\ell,\ast}z_{\ell,\ast}^{n-1}\delta z_\ell
+z_{\ell,\ast}^n\delta a_\ell\bigr)=0.
\end{equation}
Setting $c_\ell:=\delta a_\ell$ and $d_\ell:=a_{\ell,\ast}z_{\ell,\ast}^{-1}\delta z_\ell$
(well-defined since $z_{\ell,\ast}\in(0,1)$ and $a_{\ell,\ast}\neq0$), this reads
\begin{equation}
\sum_{\ell=1}^L(c_\ell+n\,d_\ell)\,z_{\ell,\ast}^n=0
\qquad n=0,\dots,2L-1.
\end{equation}
The $2L$ sequences $\{z_{\ell,\ast}^n\}_\ell$ and $\{n\,z_{\ell,\ast}^n\}_\ell$ are
linearly independent over $n=0,\dots,2L-1$ whenever the $z_{\ell,\ast}$ are pairwise
distinct and positive (quasi-polynomial linear independence;
see~\cite{batenkov-yomdin,batenkov-goldman-yomdin2021}), so $c_\ell=d_\ell=0$
for all $\ell$, hence $\delta a_\ell=0$ and $\delta z_\ell=0$.
Thus $J_\ast$ is invertible~\cite{potts-tasche,batenkov-yomdin,batenkov-goldman-yomdin2021}; see
Remark~\ref{rem:stability-operator-bounds} for an explicit bound on
$\kappa_{\mathrm{exp}}=\|J_\ast^{-1}\|$.
By the analytic inverse function theorem~\cite{batenkov-yomdin} there exist neighborhoods
$U\ni\theta_\ast$ and $V\ni y_\ast:=\mathcal F(\theta_\ast)$ such that
$\mathcal F:U\to V$ is bijective with analytic inverse, and
\begin{equation}
\|\widehat\theta-\theta_\ast\|
\le
C_2\kappa_{\mathrm{exp}}\varepsilon
\end{equation}
for all $y^\varepsilon\in V$.
Pass from $z$ to rates via $\mu=-(1/h)\log z$, where $\log$ denotes
the principal branch of the logarithm.
Since $z_{\ell,\ast}\in(0,1)\subset\mathbb R_{>0}$ and $U$ can be
chosen so that all reconstructed $\widehat z_\ell$ remain in a compact
subset of $(0,1)$, no branch crossing occurs and the inversion is
unambiguous.
The log map is then Lipschitz on this compact set, giving
\begin{equation}
|\widehat\mu_\ell-\mu_{\ell,\ast}|
\le
\frac{2}{h z_{\min}}|\widehat z_\ell-z_{\ell,\ast}|
\le
C_3\kappa_{\mathrm{exp}}\varepsilon.
\end{equation}

\smallskip
\noindent\textit{Step~2: Sector tagging via the spectral gap $\Delta_{\mathrm{gap}}$.}
The quantity $\Delta_{\mathrm{gap}}$ is computed from the ground-truth
active rates $\{\mu_{\ell,\ast}\}$ together with the spectra
$\{\sigma(A_i)\}$. For a fixed ground-truth active set, it is
independent of $\psi_i$ and $\mathcal B_0$ and is determined entirely
by the spectral geometry of the operator network $\{A_i, K_{ij}\}$.
Choose
\begin{equation}
\varepsilon_0\le \frac{\Delta_{\mathrm{gap}}}{2C_3\kappa_{\mathrm{exp}}}.
\end{equation}
Then for $\varepsilon\le\varepsilon_0$,
$|\widehat\mu_\ell-\mu_{\ell,\ast}|<\Delta_{\mathrm{gap}}/2$, so each
recovered rate stays within the sector-attribution neighborhood of
$\mu_{\ell,\ast}$ and cannot cross into any other sector's spectrum.
Sector and eigenvalue labels are therefore stable, and assignment
$\widehat i(\ell)=i_\ast(\ell)$ holds for all $\varepsilon\le\varepsilon_0$.

\smallskip
\noindent\textit{Step~3: Eigenspace components via observability.}
For each active tagged pair $(i,\alpha)$, define the restricted
observation map
\begin{equation}
T_{i,\alpha}
:=
w_i\,\mathcal B_0K_{0i}\big|_{E_{i,\alpha}}:E_{i,\alpha}\to\mathbb C.
\end{equation}
By Definition~\ref{def:sector-observability}, $T_{i,\alpha}$ is
injective. Since $E_{i,\alpha}$ is finite-dimensional, it therefore
has a bounded left inverse
$T_{i,\alpha}^{-1}: T_{i,\alpha}(E_{i,\alpha})\subset\mathbb C
\to E_{i,\alpha}$;
the norm $\|T_{i,\alpha}^{-1}\|$ quantifies how well the composite
channel $\mathcal B_0K_{0i}$ resolves the eigenspace $E_{i,\alpha}$
(see Remark~\ref{rem:stability-operator-bounds}(iii)).
For the corresponding component $x_{i,\alpha}=P_{i,\alpha}\psi_i$,
\begin{equation}
\|\widehat x_{i,\alpha}-x_{i,\alpha}\|
\le
\|T_{i,\alpha}^{-1}\|\,|\widehat a_{i,\alpha}-a_{i,\alpha}|
\le
C_{i,\alpha}\kappa_{\mathrm{exp}}\varepsilon.
\end{equation}
Summing over the finitely many active pairs and combining with Steps~1--2
in the product norm $\|\cdot\|_{\mathcal X}$ yields
\begin{equation}
\|\widehat\Theta-\Theta_\ast\|_{\mathcal X}
\le
C_{\mathrm{stab}}\kappa_{\mathrm{exp}}\varepsilon,
\qquad
C_{\mathrm{stab}}=C\!\left(L,h,\mu_{\max},\max_{(i,\alpha)}\|T_{i,\alpha}^{-1}\|\right).
\end{equation}
This proves the theorem.
\end{proof}

\begin{remark}[Operator-theoretic bounds on the stability constants]
\label{rem:stability-operator-bounds}
The three factors entering $C_{\mathrm{stab}}\kappa_{\mathrm{exp}}$ have
distinct operator-theoretic origins.

\smallskip
\noindent(i) \textit{Prony conditioning.}
The Vandermonde structure of $J_\ast$ implies the classical bound
\begin{equation}
\kappa_{\mathrm{exp}}
\le
\frac{C_L}{\displaystyle\prod_{1\le\ell<\ell'\le L}|z_{\ell}-z_{\ell'}|
\;\cdot\;\min_{1\le\ell\le L}|a_\ell|},
\end{equation}
where $C_L$ depends only on $L$.
Using $|e^{-a}-e^{-b}|\ge|a-b|e^{-\max(a,b)}$ (which follows from
$e^u-1\ge u$ for $u\ge0$), one obtains
$|z_\ell-z_{\ell'}|\ge h\,|\mu_\ell-\mu_{\ell'}|\,e^{-\mu_{\max}h}$,
so
\begin{equation}
\kappa_{\mathrm{exp}}
\le
\frac{C_L\,e^{\mu_{\max}h\cdot L(L-1)/2}}
     {h^{L(L-1)/2}
      \cdot\displaystyle\prod_{1\le\ell<\ell'\le L}|\mu_\ell-\mu_{\ell'}|
      \cdot\min_\ell|a_\ell|}.
\end{equation}
Rate pairs from different sectors contribute $|\mu_\ell-\mu_{\ell'}|\ge\Delta_{\mathrm{gap}}$
to the product; pairs within the same sector contribute intra-sector
gaps $\delta_i:=\min_{n\ne m}|\alpha_{i,n}-\alpha_{i,m}|$.
Hence $\kappa_{\mathrm{exp}}$ is explicitly controlled by the spectral
geometry of the operator network $\{A_i,K_{ij}\}$.

\smallskip
\noindent(ii) \textit{Sector geometry.}
$\Delta_{\mathrm{gap}}$ is determined entirely by $\{\sigma(A_i)\}$
and the active sector assignments; it is independent of the initial
states $\psi_i$, the weights $w_i$, and the observation channel
$\mathcal B_0$. Consequently the sector-tagging threshold $\varepsilon_0$
is a structural bound on the model, not a tuning parameter.

\smallskip
\noindent(iii) \textit{Observability.}
For a simple eigenvalue ($\dim E_{i,\alpha}=1$) the map $T_{i,\alpha}$
is scalar multiplication by
$w_i\,\mathcal B_0 K_{0i}\phi_{i,\alpha}\in\mathbb C$, so
\begin{equation}
\|T_{i,\alpha}^{-1}\|
=\frac{1}{w_i\,|\mathcal B_0 K_{0i}\phi_{i,\alpha}|}
\le\frac{1}{w_i\,\sigma_{\min}\!\bigl(\mathcal B_0 K_{0i}\big|_{E_{i,\alpha}}\bigr)},
\end{equation}
where $\sigma_{\min}$ denotes the smallest singular value.
This factor blows up precisely when the composite channel
$\mathcal B_0 K_{0i}$ is near-blind to the eigenspace $E_{i,\alpha}$,
in direct analogy with the classical observability Gramian condition.
The intertwiner $K_{0i}$ therefore plays a double role: it defines
the dynamical compatibility structure (Sections~\ref{sec:networks}--\ref{sec:generator})
and simultaneously controls the reconstruction quality in the inverse problem.
\end{remark}

The abstract bounds of Theorem~\ref{thm:stability} are made concrete
in the examples that follow.

\section{Examples}
\label{sec:examples}

The examples serve four roles: finite-dimensional and geometric
constructions that verify the cocycle structure (Examples 1, 2, 6),
failure mechanisms that show the hypotheses of
Theorems~\ref{thm:identifiability} and~\ref{thm:stability} are sharp
(Examples 3, 4), and a physical application that grounds the abstract
model in lifetime spectroscopy (Example 5).

The simplest non-trivial realization of the gauge law is a diagonal
finite-dimensional cocycle; the perturbation of one generator also
illustrates the failure mode when spectra are not homothetic.
\begin{example}[Exact cocycle in finite dimension]
Let
\begin{equation}
A_1=\mathrm{diag}(1,3),\qquad
A_2=\mathrm{diag}(2,6)=2A_1
\end{equation}
on $\mathbb C^2$, and let $K_{12}=K_{21}=I$.
Then
\begin{equation}
K_{12}e^{-tA_2}=e^{-2tA_1}K_{12},
\qquad
K_{21}e^{-tA_1}=e^{-(t/2)A_2}K_{21},
\end{equation}
so \eqref{intertwining} holds with $\lambda_{12}=2$, $\lambda_{21}=1/2$.
This realizes the gauge law $\lambda_{ij}=\tau_i/\tau_j$ with
$(\tau_1,\tau_2)=(1,2)$.

If we replace $A_2$ by $\mathrm{diag}(2,5)$ while keeping $K_{12}$ invertible,
Theorem~\ref{thm:spectral_rigidity} forbids an exact scaled intertwining on the
whole space, since the scaled spectra are no longer homothetic.
\end{example}

A natural geometric source of time-scaled cocycles is domain dilation:
scaling a PDE domain by $r$ rescales all eigenvalues by $r^{-2}$.
\begin{example}[Geometric scaling for Dirichlet Laplacians]
Let $\Omega\subset\mathbb R^d$ be smooth and bounded, and for $r>0$ define
$\Omega_r=r\Omega$. Let $A_r=-\Delta_{\Omega_r}^D$ on $L^2(\Omega_r)$.
Define the unitary dilation
\begin{equation}
(U_r f)(x)=r^{-d/2}f(x/r),\qquad U_r:L^2(\Omega)\to L^2(\Omega_r).
\end{equation}
Then
\begin{equation}
U_r^{-1}A_rU_r=r^{-2}A_1,
\qquad
U_r^{-1}e^{-tA_r}U_r=e^{-tr^{-2}A_1}.
\end{equation}
Hence $K_{1r}:=U_r^{-1}$ is an intertwiner with $\lambda_{1r}=r^{-2}$.
This is a concrete PDE model where cocycle scaling has a geometric origin.
\end{example}

The spectral separation condition is not merely sufficient but
necessary: its failure collapses sector-resolved identifiability.
\begin{example}[Failure without separation]
Assume two sectors satisfy
\begin{equation}
\sigma(A_1)\cap \sigma(A_2)\ni\mu.
\end{equation}
Choose nonzero one-mode states producing coefficients $c_1,c_2\in\mathbb C$
at the same rate $\mu$. Then
\begin{equation}
M(t)=c_1e^{-\mu t}+c_2e^{-\mu t}=(c_1+c_2)e^{-\mu t}.
\end{equation}
Thus infinitely many pairs $(c_1,c_2)$ yield the same observable.
So sector-resolved identifiability fails without spectral separation.
\end{example}

The role of $\Delta_{\mathrm{gap}}$ in Theorem~\ref{thm:stability} is
also sharp: as two rates approach each other, Prony conditioning
deteriorates without bound.
\begin{example}[Instability near spectral collision]
Consider
\begin{equation}
M(t)=a_1e^{-\mu_1 t}+a_2e^{-\mu_2 t},\qquad \mu_1\neq\mu_2.
\end{equation}
In the two-mode Prony system, the relevant Vandermonde factor is
$|\mu_2-\mu_1|$. Therefore the local conditioning scales like
$1/|\mu_2-\mu_1|$; as $\mu_2\to\mu_1$, small data perturbations produce large
parameter errors. This matches Theorem~\ref{thm:stability}: the effective
stability constant deteriorates when the spectral gap approaches zero.
\end{example}

\begin{example}[Multi-component relaxation: NMR and FLIM]
\label{ex:nmr-flim}
Fluorescence lifetime imaging (FLIM)~\cite{Lakowicz2006,Digman2008,datta-flim2020}
and NMR $T_2$-relaxometry~\cite{WhittallMacKay1989} both
produce signals of the exact form~\eqref{mixture}: the FLIM
time-correlated single-photon-counting (TCSPC) trace is a weighted
sum of exponential decays with species-specific lifetimes, and the NMR
free-induction decay is a superposition of components with
tissue-dependent $T_2$ times. Both inverse problems reduce to
multi-exponential decomposition, for which spectral separation and
sector-resolved identifiability (Theorem~\ref{thm:identifiability})
provide a rigorous uniqueness certificate. We construct an explicit
operator model and verify
Theorems~\ref{thm:identifiability} and~\ref{thm:stability}.

\smallskip
\noindent\textit{Operator model.}
Let $I=\{1,2\}$, with sector~$i$ representing a distinct molecular
environment (a fluorophore species or tissue compartment).
Set $\mathcal H_i=\ell^2(\mathbb N)$ and
\begin{equation}
A_i = \mathrm{diag}\!\left(\frac{n}{\tau_i}\right)_{n\ge1},
\qquad \tau_i>0,\qquad K_{ij}=I_{\ell^2(\mathbb N)}.
\end{equation}
Then $K_{ij}A_j = A_j = \tfrac{\tau_i}{\tau_j}A_iK_{ij}$, so the
intertwining relation~\eqref{intertwining} holds with
$\lambda_{ij}=\tau_i/\tau_j$. This is the gauge law of
Theorem~\ref{thm:gauge-representation} with $\tau_i$ as gauge
parameters. Theorem~\ref{cor:global-spectral-compatibility} gives
$\tau_1A_1=\tau_2A_2=\mathrm{diag}(n)$: the normalized generators
coincide, meaning both environments share the same intrinsic
decay-mode structure, uniformly scaled by the solvent relaxation time.

\smallskip
\noindent\textit{Spectral separation.}
Since $\sigma(A_i)=\{n/\tau_i:n\ge1\}$, the condition
$\sigma(A_1)\cap\sigma(A_2)=\varnothing$ is equivalent to
$\tau_2/\tau_1\notin\mathbb Q$. Take
\begin{equation}
\tau_1=1.0\;\mathrm{ns},\qquad \tau_2=\sqrt{2}\;\mathrm{ns},
\end{equation}
representative values for two fluorophores in distinct solvents.
Since $\tau_2/\tau_1=\sqrt{2}\notin\mathbb Q$, spectral separation holds for
all modes $n\ge1$. The stability threshold in
Theorem~\ref{thm:stability}, however, depends on the \emph{active}
inter-sector gap among the $2N_i$ observed rates rather than on the
full spectral distance between the tails of $\sigma(A_1)$ and
$\sigma(A_2)$; this gap is computed below.

\smallskip
\noindent\textit{Observable and identifiability.}
Restrict to the first two active modes per sector ($n=1,2$) and set
$w_1=w_2=1$. By Corollary~\ref{cor:finite-mixture-model} the signal is
\begin{equation}
M(t)= a_{1,1}e^{-t}+a_{1,2}e^{-2t}
     +a_{2,1}e^{-t/\sqrt{2}}+a_{2,2}e^{-\sqrt{2}\,t},
\end{equation}
with four distinct rates (in $\mathrm{ns}^{-1}$)
\begin{equation}
\mu_1=\tfrac{1}{\sqrt{2}}\approx 0.707,\quad
\mu_2=1.000,\quad
\mu_3=\sqrt{2}\approx 1.414,\quad
\mu_4=2.000.
\end{equation}
By Theorem~\ref{thm:identifiability}, the pairs $(\mu_\ell,a_\ell)$ are
uniquely determined by $M$, and each rate is uniquely sector-tagged:
$\mu_1,\mu_3\in\sigma(A_2)$ (sector~$2$) and $\mu_2,\mu_4\in\sigma(A_1)$
(sector~$1$). Under Definition~\ref{def:sector-observability}, the
active eigenspace components $P_{i,\alpha}\psi_i$ are also uniquely
recovered.

The active inter-sector gap is
\begin{equation}
\Delta_{\mathrm{gap}}
=\min\!\left(
  1-\tfrac{1}{\sqrt{2}},\;
  \tfrac{3}{\sqrt{2}}-2,\;
  1-\tfrac{1}{\sqrt{2}},\;
  2-\sqrt{2}
\right)
=\frac{3}{\sqrt{2}}-2
=\frac{3\sqrt{2}-4}{2}
\approx 0.121\;\mathrm{ns}^{-1},
\end{equation}
achieved at the pair $(\mu_4=2,\,3/\sqrt{2}\approx2.121)\in\sigma(A_2)$.
Theorem~\ref{thm:stability} then guarantees stable reconstruction for
noise levels $\varepsilon\le\varepsilon_0\propto\Delta_{\mathrm{gap}}/\kappa_{\mathrm{exp}}$,
providing a quantitative SNR threshold for the FLIM/NMR lifetime
decomposition problem.
\end{example}

\begin{example}[Quantitative Hankel--Prony reconstruction for scaled Dirichlet Laplacians]
\label{ex:dirichlet-numerical}
We make the geometric scaling example of this section quantitative,
constructing an explicit three-mode observable and exhibiting the
Hankel--Prony reconstruction data of
Theorems~\ref{thm:finite-window} and~\ref{thm:stability} with
explicit numbers.

\smallskip
\noindent\textit{Sectors and gauge structure.}
Let $\Omega_1=(0,1)$ and $\Omega_2=(0,\sqrt{3})$ in $\mathbb R$.
The Dirichlet Laplacians have eigensystems
\begin{equation}
\sigma(A_1)=\{n^2\pi^2:n\ge1\},\qquad
\sigma(A_2)=\!\left\{\frac{n^2\pi^2}{3}:n\ge1\right\}.
\end{equation}
Since $n^2=m^2/3$ would require $\sqrt{3}=m/n\in\mathbb Q$, these
sets are disjoint: $\sigma(A_1)\cap\sigma(A_2)=\varnothing$.
The gauge parameters are $\tau_1=1$ and $\tau_2=3$; indeed,
\begin{equation}
\sigma(\tau_2 A_2)=3\cdot\!\left\{\frac{n^2\pi^2}{3}\right\}
=\{n^2\pi^2\}=\sigma(\tau_1 A_1),
\end{equation}
confirming Theorem~\ref{cor:global-spectral-compatibility}.

\smallskip
\noindent\textit{Intertwiner and eigenspace transport.}
Define $K_{12}:L^2(0,\sqrt{3})\to L^2(0,1)$ by
$(K_{12}f)(x)=3^{1/4}f(\!\sqrt{3}\,x)$.
With eigenfunctions $\phi_{k,n}(x)=(2/\ell_k)^{1/2}\sin(n\pi x/\ell_k)$
for $\ell_1=1$, $\ell_2=\sqrt{3}$, a direct computation gives
\begin{equation}
(K_{12}\phi_{2,n})(x)
=3^{1/4}\cdot\frac{\sqrt{2}}{3^{1/4}}\sin(n\pi x)
=\phi_{1,n}(x)\qquad(n\ge1),
\end{equation}
confirming Proposition~\ref{prop:eigenspace_transport}: $K_{12}$ maps
eigenvectors of $A_2$ to eigenvectors of $A_1$ with scaling
$\alpha_{2,n}\mapsto\alpha_{2,n}/\lambda_{12}=\alpha_{2,n}\cdot3=\alpha_{1,n}$.

\smallskip
\noindent\textit{Three-mode observable.}
Take reference sector $0=1$, observation functional
$\mathcal B_1 f=f(x_0)$ at $x_0=0.3$, and initial states
\begin{equation}
\psi_1=\phi_{1,1}+\tfrac{1}{2}\phi_{1,2},\quad
\psi_2=\phi_{2,1},\quad w_1=w_2=1.
\end{equation}
Since $K_{12}\phi_{2,n}=\phi_{1,n}$, the observation atoms satisfy
$b_{k,n}=\mathcal B_1\phi_{1,n}(0.3)=\sqrt{2}\sin(n\pi\cdot0.3)$,
giving $b_{\cdot,1}=\sqrt{2}\sin(0.3\pi)\approx1.1441$ and
$b_{\cdot,2}=\sqrt{2}\sin(0.6\pi)\approx1.3455$.
The modal expansion~\eqref{eq:mixture-modal-expansion} yields
\begin{equation}
M(t)=
  \underbrace{1.1441}_{\text{sect.\,}2,\,n=1}\!e^{-(\pi^2/3)t}
+\underbrace{1.1441}_{\text{sect.\,}1,\,n=1}\!e^{-\pi^2 t}
+\underbrace{0.6728}_{\text{sect.\,}1,\,n=2}\!e^{-4\pi^2 t},
\end{equation}
with rates $\mu_1=\pi^2/3\approx3.290$, $\mu_2=\pi^2\approx9.870$,
$\mu_3=4\pi^2\approx39.478$.

\smallskip
\noindent\textit{Hankel--Prony reconstruction (Theorem~\ref{thm:finite-window}).}
Set $L=3$ and sampling step $h=0.05\le T/(2L-1)$.
Define $z_\ell=e^{-\mu_\ell h}$: $z_1\approx0.8482$,
$z_2\approx0.6107$, $z_3\approx0.1389$.
The six exact samples $y_n=M(nh)$ are
$y_0\approx2.9610$, $y_1\approx1.7625$, $y_2\approx1.2624$,
$y_3\approx0.9603$, $y_4\approx0.7511$, $y_5\approx0.5991$.
The $3\times3$ Hankel matrix $H=(y_{r+s})_{r,s=0}^{2}$ is
\begin{equation}
H=\begin{pmatrix}
2.9610 & 1.7625 & 1.2624\\
1.7625 & 1.2624 & 0.9603\\
1.2624 & 0.9603 & 0.7511
\end{pmatrix}.
\end{equation}
Its constant anti-diagonal structure reflects the exponential sum
form of $M$. It factorizes as $H=VDV^\top$, where
$V=(z_\ell^r)_{r,\ell}$ is a $3\times3$ Vandermonde matrix
(invertible, since $z_1,z_2,z_3$ are distinct) and
$D=\mathrm{diag}(1.1441,1.1441,0.6728)$ (invertible, since all
$a_\ell\neq0$). By Theorem~\ref{thm:finite-window}(ii), $H$ is
invertible and $\{(\mu_\ell,a_\ell)\}_{\ell=1}^3$ are uniquely
recovered from $y_0,\ldots,y_5$ alone, without knowledge of the
sector structure.

\smallskip
\noindent\textit{Stability bound (Theorem~\ref{thm:stability}).}
Sector tagging assigns $\mu_1\in\sigma(A_2)$ and
$\mu_2,\mu_3\in\sigma(A_1)$. The three inter-sector distances are
\begin{align}
\mathrm{dist}(\mu_1,\sigma(A_1))&=\pi^2-\tfrac{\pi^2}{3}=\tfrac{2\pi^2}{3},\\
\mathrm{dist}(\mu_2,\sigma(A_2))&=\tfrac{4\pi^2}{3}-\pi^2=\tfrac{\pi^2}{3},\\
\mathrm{dist}(\mu_3,\sigma(A_2))&=4\pi^2-3\pi^2=\pi^2,
\end{align}
so $\Delta_{\mathrm{gap}}=\pi^2/3\approx3.290$.
Choosing $\varepsilon_0=\Delta_{\mathrm{gap}}/(2C_3\kappa_{\mathrm{exp}})$,
Theorem~\ref{thm:stability} guarantees that for every
$\varepsilon\le\varepsilon_0$ a unique reconstructed parameter
$\widehat\Theta$ exists near $\Theta_*$ satisfying
\begin{equation}
\|\widehat\Theta-\Theta_*\|_{\mathcal X}
\le C_{\mathrm{stab}}\,\kappa_{\mathrm{exp}}\,\varepsilon,
\end{equation}
and the sector-eigenvalue assignments remain stable. The ratio
$\Delta_{\mathrm{gap}}/(\mu_3-\mu_1)\approx3.290/36.188\approx0.091$
shows that the stability margin is set by the closest inter-sector
neighbor, not by the total spectral range.
\end{example}

\section{Bundle-Theoretic Synthesis}
\label{sec:bundle}

With all results in place, we step back and read the full theory in
geometric language.

The family $\{\mathcal H_i\}_{i\in I}$ with transition operators
$\{K_{ij}\}$ constitutes a Hilbert bundle over the discrete index set
$I$: the cocycle condition~\eqref{cocycle} is precisely the
compatibility condition for transition functions of a vector bundle,
and the operators $K_{ij}$ are parallel transport maps carrying fiber
$\mathcal H_j$ to fiber $\mathcal H_i$.

The time-scaling factors $\{\lambda_{ij}\}$ define a scalar
$\mathbb R_{>0}$-bundle over $I$. Theorem~\ref{thm:gauge-representation}
asserts that this scalar bundle is flat: its transition cocycle is a
coboundary $\lambda_{ij}=\tau_i/\tau_j$, admitting a global gauge
$\{\tau_i\}$. Crucially, flatness is not assumed --- it is proved from
the intertwining constraint~\eqref{intertwining}. The operator structure
forces the bundle to be flat; this is the geometric content of gauge
rigidity. Trivial cycle products are then exactly vanishing holonomy:
parallel transport around any closed loop in $I$ is the identity, the
standard characterization of a flat connection~\cite{kobayashi-nomizu}.
In particular, the gauge parameters $\{\tau_i\}$ define a global time
synchronization: setting $t_i=\tau_i s$ for a universal parameter $s>0$
reduces the intertwining relation to
$K_{ij}\mathcal S_j(\tau_j s)=\mathcal S_i(\tau_i s)K_{ij}$,
showing that all sector dynamics run at a common phase $s$ with
sector-specific clock rates $\tau_i$.

Each $K_{ij}$ is invertible (the cocycle forces $K_{ij}K_{ji}=I$), so
parallel transport is a fiber isomorphism. Theorem~\ref{thm:spectral_rigidity}
is the statement that the spectrum is preserved under parallel transport
up to the flat scaling $\lambda_{ij}$:
$\sigma(A_j)=\lambda_{ij}^{-1}\sigma(A_i)$. The common isospectral
class $\sigma(\tau_iA_i)$ is a global section of the spectral data,
constant across all fibers. This refines to the eigenspace level:
parallel transport carries eigenspaces of $A_j$ isomorphically onto
eigenspaces of $A_i$ with exact eigenvalue scaling. The stability
constant $\|T_{i,\alpha}^{-1}\|$ in Theorem~\ref{thm:stability}
measures how well $\mathcal B_0 K_{0i}$ resolves individual eigenspace
fibers and is therefore an observability index for the bundle.

The mixture observable $M(t)$ of Section~\ref{sec:mixtures} completes the
bundle picture on the inverse side. Each sector contributes its fiber
evolution $\mathcal S_i(t)\psi_i\in\mathcal H_i$; the parallel
transport $K_{0i}$ pulls this back to the reference fiber
$\mathcal H_0$, and $\mathcal B_0$ projects onto the scalar output.
Thus $M(t)$ is the observation of the aggregated parallel-transported
fiber dynamics through a single reference channel. Identifiability
(Theorem~\ref{thm:identifiability}) is then the statement that the
fiber contributions are distinguishable in $M$: spectral separation
ensures the fibers have disjoint spectral fingerprints, so each
recovered rate $\mu_\ell$ uniquely identifies its source fiber
$i(\ell)$. The eigenspace observability condition
(Definition~\ref{def:sector-observability}) ensures that $\mathcal
B_0K_{0i}$ does not collapse any eigenspace fiber to zero --- it is
the injectivity condition on the observation map restricted to each
fiber. The two identifiability hypotheses thus have precise bundle
interpretations: spectral separation separates the fibers globally,
eigenspace observability resolves them locally at the eigenspace level.

\begin{remark}[Continuous index sets]
The base $I$ is here discrete and finite, so no topology is required.
The natural extension is to a continuous parameter space --- say
$I=[0,1]$ or a Riemannian manifold --- with $\{A_s\}$ a smooth family
of generators and $K_{s,t}$ a parallel transport defined by a
connection on an infinite-dimensional Hilbert
bundle~\cite{kobayashi-nomizu}. In that setting, flatness becomes a
non-trivial geometric condition on the connection, and its failure
would produce non-trivial holonomy: observable phase shifts between
sector dynamics. We leave this generalization for future work.
\end{remark}

\begin{remark}[Broader connections]
Two broader connections are worth recording. First, the holonomy
effects just described would be the network analogue of geometric
phases in adiabatic quantum transport: there, parallel transport of
eigenspaces along a loop in parameter space produces the Berry phase,
which is precisely the holonomy of the associated spectral
bundle~\cite{berry1984,simon1983}; for degenerate eigenspaces the
transport becomes non-abelian~\cite{wilczek-zee1984}, matching the
eigenspace-dimension compatibility of
Theorem~\ref{thm:existence-characterization}. Second, the discrete
network studied here can be read as a lattice gauge configuration: the
scaling factors $\lambda_{ij}$ constitute a gauge field on the edges of
$I$, Theorem~\ref{thm:gauge-representation} states that this field is
pure gauge, and Corollary~\ref{cor:cycle-consistency} expresses the
triviality of all Wilson-loop observables.
\end{remark}


\section{Conclusion}

The central result of this work is a rigidity phenomenon for
time-scaled intertwining networks of semigroups: the scaling factors
necessarily form a multiplicative coboundary $\lambda_{ij}=\tau_i/\tau_j$,
and the renormalized generators $\{\tau_i A_i\}$ belong to a common
isospectral class with matching eigenspace dimensions. This provides a
complete intrinsic characterization of admissible intertwining
structures and shows that spectral compatibility is enforced by the
cocycle relations themselves.

From a structural viewpoint, the intertwining operators define parallel
transport in a Hilbert bundle over the index network, and gauge rigidity
is equivalent to flatness of this bundle. In particular, the spectral
data admit a global representation in which eigenspaces are transported
isomorphically across sectors.

On the inverse side, the associated mixture observables reduce, under
finite spectral support, to structured exponential models. Spectral
separation yields uniqueness of modal parameters and sector
identification, while observability ensures recovery of eigenspace
components. The reconstruction and stability results show that the
inverse problem is quantitatively controlled by the spectral geometry
of the operator network.

These results separate structural admissibility from numerical
estimation and suggest several directions for further study, including
extensions to continuous index sets, infinite-rank observation channels,
and non-self-adjoint generators.

\section*{Declarations}

\textbf{Funding.} The author received no financial support for the research, authorship, or publication of this article.

\textbf{Conflict of interest.} The author declares no competing interests.

\textbf{Data availability.} Not applicable.

\bibliographystyle{amsplain}
\bibliography{references}

\end{document}